\newtheorem{theorem}{Theorem}
\newtheorem{proposition}[theorem]{Proposition}
\newtheorem{lemma}[theorem]{Lemma}
\newtheorem{hypothesis}[theorem]{Hypothesis}
\theoremstyle{definition}
\newtheorem{defn}[theorem]{Definition}
\newtheorem{rem}[theorem]{Remark}
\newtheorem{app}[theorem]{Application}
\newtheorem{conv}[theorem]{Convention}
\numberwithin{equation}{section}		
\numberwithin{figure}{section}		
\numberwithin{table}{section}
\newcommand{\Left}[2]{\left#1 \vphantom{\del[#2]{X^X}}\right.\!}
\newcommand{\Right}[2]{\!\left. \vphantom{\del[#2]{X^X}}\right#1}
\newcommand{\reals}{\mathbb{R}}
\newcommand{\ints}{\mathbb{Z}}
\newcommand{\nats}{\mathbb{N}}
\newcommand{\field}{\mathbb{K}}
\newcommand{\id}{\mathrm{id}}
\newcommand{\one}{\mathds{1}}
\newcommand{\bord}[1]{{\mathcal{#1}}}
\newcommand{\homsetin}[3]{\mathrm{Hom}_{#1} \left( #2 ,\, #3 \right)}
\newcommand{\vect}{\mathrm{vect}}
\newcommand{\bordcat}{\mathrm{Cob}_3^\ccat}
\newcommand{\bordcatskel}{\mathrm{Cob}_{3,\mathrm{sk}}^\ccat}
\newcommand{\ccat}{\mathfrak{C}}
\newcommand{\pdim}[1]{\mathrm{d}_{#1}}
\newcommand{\pic}[2]{\FPeval{\result}{round(0.7*#1,5)}\raisebox{-0.5\height}{\includegraphics[height=\result cm]{#2}}}
\newcommand{\pica}[2]{\raisebox{-0.5\height}{\includegraphics[scale=#1]{#2}}}
\renewcommand{\Left}[2]{\left#1 \vphantom{\del[#2]{X^X}}\right.\!}
\renewcommand{\Right}[2]{\!\left. \vphantom{\del[#2]{X^X}}\right#1}
\begin{document}
\begin{textblock}{10}(8,1)
\begin{flushright}
\noindent\small[ZMP-HH/21-5]\\
Hamburger Beitr\"age zur Mathematik Nr. 893\\
April 2021
\end{flushright}
\end{textblock}
\title{
		\large Frobenius-Schur Indicators and the Mapping Class Group of the Torus
}
\author{
		\normalfont 								\normalsize
        Julian Farnsteiner and Christoph Schweigert\\		\normalsize
        \small{\textit{Fachbereich Mathematik}}\\[-5pt]
        \small{\textit{Universit\"at Hamburg}}\\[-5pt]
        \small{\textit{Bereich Algebra und Zahlentheorie}}\\[-5pt]
        \small{\textit{Bundesstra\ss{}e 55}}\\[-5pt]
        \small{\textit{D – 20146 Hamburg}}
}
\date{}

\maketitle

\begin{abstract}
The Turaev-Viro state sum invariant can be extended to 3-manifolds with
free boundaries. We use this fact to describe generalized Frobenius-Schur indicators
as Turaev-Viro invariants of solid tori. This provides
a geometric understanding of the $\mathrm{SL}(2,\ints)$-equivariance of these indicators. 
\end{abstract}

\tableofcontents

\section{Introduction}\label{sec_intro}
Let $\field$ be an algebraically closed field.
The Frobenius-Schur indicator assigns a scalar $\nu(V) \in \field$ to a finite-dimensional $\field$-linear representation $V$ of a finite group $G$.
The concept of a Frobenius-Schur indicator has found its natural conceptual home in the context of spherical fusion categories \cite{ksz_higher_fs,sz-congruencesubgroup} \cite{ns-basic,ns-general}. In full generality, given a fixed spherical fusion category $\ccat$ over $\field$, the Frobenius-Schur indicator $\nu_{n,r}^X(V) \in \field$ is a scalar assigned to an object $V \in \ccat$, with an additional argument in the Drinfeld center $X \in Z(\ccat)$, and $(n,r)\in \ints \times \ints$.
Specializing to the case $\ccat = G$-rep, $X = \one$, $n=2$, $r=1$, one obtains the classical indicator $\nu(V) = \nu_{2,1}^\one(V)$.
We recall the definition in \Cref{sec_cats} (\Cref{defn_fs_general}).

In the upper argument $X$, the Frobenius-Schur indicator is additive with respect to direct sums. 
This allows us to extend the range of this argument from objects $X \in Z(\ccat)$ in the Drinfeld center to elements of the Grothendieck algebra of the Drinfeld center, $K_0(Z(\ccat))\otimes_\ints \field$. 
The Drinfeld center $Z(\ccat)$ is a modular fusion category, hence the $\field$-vector space $K_0(Z(\ccat))\otimes_\ints \field$ comes with the structure of a left action of the modular group $\mathrm{SL}(2,\ints)$, see \cite[Sec. 1.4]{ns-general}.
The group $\mathrm{SL}(2,\ints)$ also acts on the lower argument $(n,r)$ of the indicator via right matrix multiplication.

The Frobenius-Schur indicator turns out to satisfy a rather surprising equivariance property with respect to these actions, as first shown in \cite{sz-congruencesubgroup}: 
For $\mathfrak{g} \in \mathrm{SL}(2,\ints)$, we have
\begin{equation}\label{eq_intro_equiv}
    \nu_{(n,r)}^{\mathfrak{g}[X]} (V) = \nu_{(n,r)\Tilde{\mathfrak{g}}}^{[X]} (V),
\end{equation}
where $\Tilde{\mathfrak{g}} = \mathrm{diag}(1,-1)\;\mathfrak{g}\;\mathrm{diag}(1,-1)$.

In this general form, Frobenius-Schur indicators have found several applications.
\begin{app}
Kashina, Sommerh\"auser and Zhu extensively study higher Frobenius-Schur indicators in representation categories of semisimple Hopf algebras in \cite{ksz_higher_fs}. (Note that the representation category of a semisimple Hopf algebra has a canonical pivotal structure.) In their work, three
distinct formulae for the indicators are presented. Their results include a generalization of
the classical theorem on Frobenius-Schur indicators \cite[Thm. 2.4]{ksz_higher_fs}, as well as relations to other invariants, among them the following (previously conjectured) result: 
If $p\in\field$ is a prime in an algebraically closed field of characteristic zero, and $p$ divides $\dim H$ (where $H$ is a semisimple finite-dimensional Hopf algebra over $\field$), then $p$ also divides the so-called exponent $\exp H$ of $H$ \cite[Thm. 3.4]{ksz_higher_fs}. 
\end{app}

\begin{app}
In \cite{ns-quasihopf}, Ng and Schauenburg apply the definition of higher Frobenius-Schur indicators for $X=\one$ to the case of semisimple quasi-Hopf algebras.

They present four quasi-Hopf algebras whose representation categories share the same number of (isomorphism classes of) simple objects and the same fusion rules, and use the indicators to show that nevertheless, these categories are inequivalent as monoidal categories.
Before their work, it had been known that precisely four inequivalent fusion categories of this type exist, but an explicit construction in terms of representation categories had not been presented.

Different quasi-Hopf algebras with tensor-equivalent representation categories share the same (higher) Frobenius-Schur indicators.
Ng and Schauenburg establish inequivalence by finding a family of representations -- one for each of the four quasi-Hopf algebras -- that are necessarily identified by any tensor equivalence.
They then explicitly compute some indicators and show that they do not agree \cite[Thm. 6.1]{ns-quasihopf}.
To this end, the second (classical) Frobenius-Schur indicator is not sufficient and higher indicators are needed.
\end{app}

\begin{app}
Finally, we remark that the full generality of \Cref{defn_fs_general} has been used to prove the congruence subgroup conjecture, in \cite[Thms. 9.3 and 9.4]{sz-congruencesubgroup} for semisimple Hopf algebras, and in \cite[Thms. 6.7 and 6.8]{ns-general} in the context of spherical fusion categories, along with some conjectures about rational conformal field theory \cite[Sec. 8]{ns-general}.
In this application, it is important to consider for $X \in Z(\ccat)$ also objects different from the monoidal unit.
\end{app}

While the equivariance \eqref{eq_intro_equiv} has proven to be useful, its algebraic proofs in \cite{sz-congruencesubgroup} and \cite{ns-general} only provide limited insight into its conceptual origin.
The purpose of this note is to provide such an understanding in topological terms.

The basic idea is to obtain the Frobenius-Schur indicator from the evaluation of a topological field theory (TFT) associated to the spherical fusion category $\ccat$ on a suitable 3-manifold. 
Concretely, we build on the construction of a state-sum theory in the formulation of Turaev and Virelizier \cite{tv}. 
An important algebraic feature of the Frobenius-Schur indicator is the fact that both the category $\ccat$ and its Drinfeld center $Z(\ccat)$ appear.
The Drinfeld center naturally appears in Turaev-Viro invariants of closed oriented 3-manifolds.
To see also objects in $\ccat$ itself, we extend the notion of Turaev-Viro invariants to 3-manifolds with boundary. 
Note that in contrast to the manifolds with boundary that most commonly appear in this context (e.g. in [TV17]), our boundaries are not ``gluing'' boundaries that arise when cutting a larger 3-manifold into pieces, but ``physical'' or \emph{free} boundaries.
Consequently, a scalar (not a linear map, as one might expect) is assigned to a 3-manifold with free boundary. 
As usual for state-sum constructions, auxiliary structure is needed.
Following \cite{tv}, we use a skeleton embedded in the manifold (similar to a triangulation). 
We describe this construction in \Cref{sec_tvgen}.
In \Cref{sec_tvfs}, we show that the Frobenius-Schur indicator is obtained for a specific skeleton from a solid torus with additional structure, namely embedded links (Wilson lines) in the interior and on the boundary.
This is the content of \Cref{prop_tvfs}.
An example for such a solid torus is given in picture \eqref{eq_fs_man_ex}, the skeleton we use
in \eqref{eq_fs_man_skel}.

In this setup, the $\mathrm{SL}(2,\ints)$-equivariance can be related to geometry: $\mathrm{SL}(2,\ints)$ is the mapping class group of the two-dimensional torus. 
Using standard techniques from topological field theory, we can cut the solid torus along an embedded torus into two connected components, and glue them together again along an element of the mapping class group. 
This is illustrated in the picture \eqref{glue_tor_inside} in \Cref{sec_tvfs_equiv}.  
We show that this operation amounts to acting with the corresponding element of $\mathrm{SL}(2,\ints)$ on the upper argument of the Frobenius-Schur indicator; it also amounts to acting on the lower argument.
This fact implies the equivariance \eqref{eq_intro_equiv}.

Our construction uses explicit skeleta. 
The arguments can be extended to an alternative proof of $\mathrm{SL}(2,\ints)$-equivariance. However, such a proof requires a lot of elaboration and technical arguments, in particular about skeleta, that do not add to the conceptual understanding of the well-established equivariance property (see \Cref{rem_noproof}). 
It is therefore much more attractive to complete a proof by showing that our construction of assigning scalars to 3-manifolds with boundary, together with the usual Turaev-Viro theory, assemble into a TFT defined on a cobordism category with free boundaries.
Such a proof, unfortunately, is beyond the scope of this note and is relegated to future
work. 
From this perspective, the construction in this note may be seen as a consistency check that it makes sense to generalize the Turaev-Viro TFT to cobordisms with free boundary.

\section{Preliminaries}
\label{sec_cats}
Throughout this work, let $\ccat$ denote a spherical fusion category over an algebraically closed field $\field$ of characteristic 0.
We follow the conventions in \cite{egno}, and proceed to make some definitions more explicit and introduce some notation.

We write the data of a monoidal category as $(\ccat, \otimes, \one)$, or simply as $\ccat$, consistently suppressing associativity and unitality constraints in notation.

For a monoidal category $\ccat$, we use the following notion of duality:
A \emph{right dual} of an object $X\in \ccat$ is an object $X^\vee \in \ccat$, together with morphisms $\mathrm{ev}_X : X^\vee \otimes X \to \one$ and $\mathrm{coev}_X : \one \to X \otimes X^\vee$, satisfying zig-zag relations.
In the same situation, $X$ together with these morphisms is called a \emph{left dual} of $X^\vee$.

A basic example for a category with duals, which we recall in order to introduce a notational convention, is the category $\vect$ of finite-dimensional $\field$-vector spaces.
A right dual of a (finite-dimensional) vector space $V$ is given by the linear dual $V^*$, with the evaluation given by the standard pairing.
In terms of a basis $(\alpha_i)$ and the corresponding dual basis $(\alpha^*_i)$ the coevaluation in $V$ is defined by
\begin{equation}\label{eq_coev_sum_conv}
    \mathrm{coev}_V (1) = \sum_{i=1}^{\dim(V)}\alpha_i \otimes \alpha^*_i = \alpha_i \otimes \alpha^*_i = \alpha \otimes \alpha^*.
\end{equation}
We here introduced a form of the Einstein summation convention, dropping the sum from the notation, or even the summation index. 
This convention will in particular be applied to hom-spaces.

A pivotal structure on a rigid monoidal category $\ccat$ is a monoidal natural isomorphism $j: \id_\ccat \to (-)^{\vee\vee}$.
In a pivotal category $\ccat$, that is, a rigid monoidal category with pivotal structure, we can define left and right traces of endomorphisms $f$, $\mathrm{tr}_l (f)$ and $\mathrm{tr}_r (f)$.
These are morphisms $\one \to \one$, which, if $\ccat$ is a $\field$-linear category with simple unit $\one$, correspond under the canonical identification $\homsetin{\ccat}{\one}{\one}\cong \field$ to a scalar.
The left and right (pivotal) dimensions $\dim_l(X)$ and $\dim_r(X)$ of an object $X \in \ccat$ are given by the corresponding traces of the identity.
A pivotal structure is said to be spherical if $\mathrm{tr}_r (f) = \mathrm{tr}_l (f)$ for any endomorphism $f$ of any object $X$.
In this case, we drop the subscripts $r$ and $l$ from the expressions for traces and dimensions, and abbreviate further $\pdim{X} = \dim(X)$.

There is a coherence theorem for pivotal monoidal categories \cite[Thm. 2.2]{ns-basic}, which states that any pivotal monoidal category is equivalent (under a suitable notion of pivotal monoidal equivalence) to a strict pivotal monoidal category, in which the tensor product is strict and $j$ is the identity.
This justifies omitting the pivotal constraint $j$ from our notation when we work with these categories. 
Moreover, we do not distinguish right duals and left duals, and write in pivotal categories $X^*$ for the two-sided dual of an object $X$.\\

In a spherical fusion category $\ccat$, the dimensions of simple objects are non-zero \cite[Prop. 4.8.4]{egno}.
We will always denote by $I$ a complete set of representatives of simple objects, and assume that the representative of $[\one]$ is $\one$.
We define the dimension of the category as 
\begin{equation*}
    \dim(\ccat) := \sum_{i\in I} \pdim{i}^2.
\end{equation*}
Note that some authors \cite{balskir} define the dimension to be the square root of this quantity instead.

Moreover,  the pairing
\begin{equation}\label{eq_pairing}
    \homsetin{\ccat}{X}{Y}\otimes\homsetin{\ccat}{Y}{X} \to \one, \quad f\otimes g \mapsto \mathrm{Tr}(g\circ f)
\end{equation}
is non-degenerate (semisimplicity is essential for this), inducing an isomorphism 
\begin{equation}\label{eq_homspacedual_serre}
    \homsetin{\ccat}{X}{Y}^* \cong \homsetin{\ccat}{Y}{X}.
\end{equation}

We will make heavy use of the graphical calculus of string diagrams appropriate for spherical fusion categories, with conventions similar to those in e.g. \cite{balskir}.
Recall that the principal feature of string diagrams in \emph{spherical} fusion categories is that closed diagrams on the sphere $\mathbb{S}^2$ can be evaluated to a scalar in $\field$ (see \cite[Lem. 2.9]{tv}).

We will often consider pairs of dual bases of a hom-space. 
These will be denoted by lowercase greek letters $\alpha, \beta, \dots$.
We adopt the summation conventions from \eqref{eq_coev_sum_conv} to the graphical calculus:
Whenever we draw a string diagram in which a coupon is labeled by $\alpha$ (or $\beta, \dots$), and another coupon is labeled by $\alpha^*$, we evaluate it as follows.
Choose a basis $(\alpha_i)$ of a hom-set appropriate for the $\alpha$-labeled coupon, and denote by $(\alpha_i^*)$ the image of the dual basis under the isomorphism \eqref{eq_homspacedual_serre}.
We require that the adjacent strands of the $\alpha^*$-coupon are such that the the coupon is allowed to be labeled by the elements $\alpha_i^*$.
Then we replace the labels $\alpha$ and $\alpha^*$ by $\alpha_i$ and $\alpha_i^*$, and take the sum over $i$.
As an example, the following lemma uses this convention:

\begin{lemma}\label{prop_trace_homendos}
Let $X,Y \in \ccat$, and $F : \homsetin{\ccat}{X}{Y} \to \homsetin{\ccat}{X}{Y}$ a linear map. 
Then the following equality holds for the trace $\mathrm{Tr}$ of linear endomorphisms of vector spaces:
\begin{equation*}
    \mathrm{Tr}(F) = \pica{1}{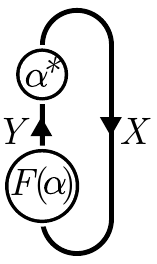}.
\end{equation*}
\end{lemma}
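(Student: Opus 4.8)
The plan is to unwind the graphical convention on the right-hand side and recognize the resulting scalar as a sum of matrix entries of $F$ against a dual basis. First I would pick a basis $(\alpha_i)$ of $\homsetin{\ccat}{X}{Y}$, write $F(\alpha_i) = \sum_j F_{ji}\,\alpha_j$ so that $\mathrm{Tr}(F) = \sum_i F_{ii}$, and let $(\alpha_i^*)$ be the image of the dual basis of $\homsetin{\ccat}{X}{Y}^*$ under the isomorphism \eqref{eq_homspacedual_serre}; concretely this means $\mathrm{Tr}(\alpha_i^* \circ \alpha_j) = \delta_{ij}$ via the non-degenerate pairing \eqref{eq_pairing}. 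The diagram on the right should then be (by our stated convention) the sum over $i$ of a closed string diagram containing a coupon labeled $F(\alpha_i)$ and a coupon labeled $\alpha_i^*$, with the strands $X$ and $Y$ connecting them in a way that realizes the composite $\mathrm{Tr}\big(\alpha_i^* \circ F(\alpha_i)\big)$; since closed diagrams in a spherical fusion category evaluate to scalars, this makes sense.

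The computation then proceeds as follows. Expanding $F(\alpha_i) = \sum_j F_{ji}\,\alpha_j$ inside the diagram and using linearity of the graphical calculus in coupon labels, the right-hand side becomes $\sum_i \sum_j F_{ji}\,\mathrm{Tr}\big(\alpha_i^* \circ \alpha_j\big) = \sum_i \sum_j F_{ji}\,\delta_{ji} = \sum_i F_{ii} = \mathrm{Tr}(F)$, where the middle equality is precisely the duality relation between $(\alpha_j)$ and $(\alpha_i^*)$ coming from \eqref{eq_homspacedual_serre}. The only genuine point to verify is that the closed diagram appearing in the statement really does encode the pairing $f \otimes g \mapsto \mathrm{Tr}(g \circ f)$ of \eqref{eq_pairing}: this is a matter of reading off the picture \texttt{chap\_cats\_pics/tracelem.pdf}, checking that the two coupons are joined along the $X$-strand at one end and the $Y$-strand at the other so as to form the endomorphism $\alpha_i^* \circ F(\alpha_i)$ of, say, $X$, and that the remaining closure takes its trace. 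One should also confirm that the independence of the final scalar from the choice of basis $(\alpha_i)$ is automatic, since $\mathrm{Tr}(F)$ manifestly does not depend on it.

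The main obstacle is thus not any deep argument but the bookkeeping of the graphical convention: one must make sure that the specific routing of strands in the displayed picture is exactly the one for which the ``$\alpha$ together with $\alpha^*$'' rule produces $\mathrm{Tr}(\alpha_i^* \circ F(\alpha_i))$ rather than, say, $\mathrm{Tr}(F(\alpha_i) \circ \alpha_i^*)$ or a variant twisted by duals — though in a spherical category these agree, so even this subtlety is harmless. I would therefore present the proof as: expand the coupon label by linearity, apply the defining property of the dual basis via \eqref{eq_homspacedual_serre} and \eqref{eq_pairing}, and collect terms to obtain $\sum_i F_{ii} = \mathrm{Tr}(F)$.
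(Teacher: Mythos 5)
Your proposal is correct and follows essentially the same route as the paper's proof: choose a basis $(\alpha_i)$ of $\homsetin{\ccat}{X}{Y}$, use the isomorphism \eqref{eq_homspacedual_serre} to produce the dual basis $(\alpha_i^*)$, read the diagram as $\sum_i \mathrm{tr}(\alpha_i^* \circ F(\alpha_i))$, and expand $F$ in matrix form to recover $\sum_i F_{ii} = \mathrm{Tr}(F)$. The only addition in your write-up is the explicit remark that sphericality makes $\mathrm{tr}(\alpha_i^* \circ F(\alpha_i))$ and $\mathrm{tr}(F(\alpha_i)\circ\alpha_i^*)$ agree, which is a harmless but reasonable aside.
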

\begin{proof}
The string diagram depicted involves a choice of basis $(\alpha_i)$ of $\homsetin{\ccat}{X}{Y}$ (but we will see that its value in $\field$ is independent of this choice).
The identification \eqref{eq_homspacedual_serre} determines a dual basis $(\alpha_i^*)$ of $\homsetin{\ccat}{Y}{X}$.
The basis $(\alpha_i)$ also determines a matrix representing $F$, whose components we denote by $F_{ij}$.
The diagram is then evaluated as
\begin{equation*}
    \sum_i \mathrm{tr}(\alpha_i^* \circ F(\alpha_i)) = \sum_i \mathrm{tr}\del[2]{\alpha_i^* \circ \sum_j F_{ji}\alpha_j} = \sum_{ij} F_{ji} \underbrace{\mathrm{tr}(\alpha_i^* \circ \alpha_j)}_{\delta_{ij}} = \sum_i F_{ii} = \mathrm{Tr}(F).
\end{equation*}
\end{proof}

We recall some more rules for string diagrams in spherical fusion categories.
In the lemmata below, single strands labeled by capital letters $X,Y,\dots$ can be replaced by a collection of parallel strands. 
$\Gamma$-labeled coupons are always meant to be a suitable morphism.

In particular, string diagrams with coupons labeled by pairs of dual bases come with useful rules that allow us to simplify them.
The following lemma shows that we can dissolve strands. A detailed proof can be found in \cite[Lem. 1.8]{passegger}.
It is clear that finiteness of the category $\ccat$ is necessary to state the lemma, and semisimplicity is required in the proof.
\begin{lemma}[{\cite[Lem. 1.1 (2)]{balskir}}]\label{rule_dissolve_edges}
Recall that $I$ is a set of representatives of simple objects in $\ccat$.
Then
\begin{equation*}
    \sum_{i\in I} \pdim{i} \pica{1}{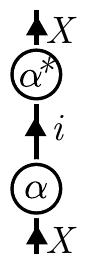} \quad = \quad  \pica{1}{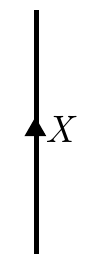}.
\end{equation*}
\end{lemma}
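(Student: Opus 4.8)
The plan is to strip the statement down to a basis-free identity of morphisms and then recognise it as the decomposition of the identity morphism into isotypic projectors. First I would note that, by locality of the graphical calculus for spherical fusion categories, it suffices to check the equality of the two diagrams in the special case in which the bundle of parallel strands is a single strand coloured by an arbitrary object $W\in\ccat$ (a cable of parallel strands being the single strand coloured by their tensor product), with no further ambient structure: any surrounding diagram, including the $\Gamma$-coupons, is left untouched by the move and passes through both sides unchanged. In this reduced form the claim is the assertion
\[
\sum_{i\in I}\pdim{i}\;\sum_{k}\alpha_k\circ\alpha_k^{*}=\id_W\qquad\text{in }\homsetin{\ccat}{W}{W},
\]
where, for each fixed $i\in I$, $(\alpha_k)_k$ is a basis of $\homsetin{\ccat}{i}{W}$ and $(\alpha_k^{*})_k$ the dual basis of $\homsetin{\ccat}{W}{i}$ under the pairing \eqref{eq_pairing}, exactly as prescribed by the summation convention fixed above; the two coupons in the left-hand diagram, joined along the $i$-coloured edge, compose to the morphism $W\to W$ written here.

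Next I would invoke semisimplicity of $\ccat$ to fix a decomposition $W\cong\bigoplus_{i\in I}\bigoplus_{k} i$, i.e. morphisms $e_{i,k}\colon i\to W$ and $p_{i,k}\colon W\to i$ with $p_{i,k}\circ e_{i,l}=\delta_{kl}\,\id_i$ and $\sum_{i,k}e_{i,k}\circ p_{i,k}=\id_W$; the partial sums $\pi_i:=\sum_k e_{i,k}\circ p_{i,k}$ are the isotypic projectors. For fixed $i$, $(e_{i,k})_k$ is a basis of $\homsetin{\ccat}{i}{W}$ and $(p_{i,k})_k$ a basis of $\homsetin{\ccat}{W}{i}$, and since
\[
\mathrm{Tr}\bigl(p_{i,l}\circ e_{i,k}\bigr)=\mathrm{Tr}\bigl(\delta_{kl}\,\id_i\bigr)=\delta_{kl}\,\pdim{i},
\]
the basis of $\homsetin{\ccat}{W}{i}$ dual to $(e_{i,k})_k$ under \eqref{eq_pairing} is $\bigl(\pdim{i}^{-1}p_{i,k}\bigr)_k$; the spherical fusion hypothesis enters here through $\pdim{i}\neq 0$ for all $i\in I$ \cite[Prop. 4.8.4]{egno}. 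Choosing $\alpha_k=e_{i,k}$ and $\alpha_k^{*}=\pdim{i}^{-1}p_{i,k}$ yields $\sum_k\alpha_k\circ\alpha_k^{*}=\pdim{i}^{-1}\pi_i$, hence
\[
\sum_{i\in I}\pdim{i}\sum_{k}\alpha_k\circ\alpha_k^{*}=\sum_{i\in I}\pi_i=\id_W,
\]
which is the desired identity; independence of the left-hand side from the chosen bases is automatic, since changing $(\alpha_k)_k$ by an invertible matrix changes $(\alpha_k^{*})_k$ by the inverse transpose and the two cancel.

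Finally I would read this back into the diagrams: by the conventions for dual-basis coupons, the left-hand string diagram in the statement is precisely the morphism $\sum_{i\in I}\pdim{i}\sum_k\alpha_k\circ\alpha_k^{*}$ placed on the strand $W$, the right-hand diagram is the identity strand $W$, and locality promotes the equality back to the presence of an arbitrary cable of strands inside an arbitrary ambient diagram. I do not anticipate a genuine difficulty here; the points that want care are only (i) matching the normalisation — the weight $\pdim{i}$ is exactly what turns the isotypic projectors $\pi_i$ into a partition of $\id_W$ once the coupons are normalised against the trace pairing \eqref{eq_pairing} rather than against the relation $p\circ e=\id$ — and (ii) justifying the reduction to a single strand and to an empty ambient diagram, which is just functoriality and locality of the graphical calculus; a fully spelled-out argument can be found in \cite[Lem. 1.8]{passegger}.
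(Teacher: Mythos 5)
Your proof is correct, and it is the standard argument. Note that the paper itself supplies no proof for this lemma: it simply cites \cite[Lem.~1.1~(2)]{balskir} for the statement and points to \cite[Lem.~1.8]{passegger} for a detailed verification, noting only that semisimplicity is essential. Your self-contained derivation -- decomposing $\id_W$ into isotypic projectors $\pi_i=\sum_k e_{i,k}\circ p_{i,k}$, observing that the trace pairing \eqref{eq_pairing} forces the renormalisation $\alpha_k^{*}=\pdim{i}^{-1}p_{i,k}$ (using $\pdim{i}\neq 0$), and noting that the weight $\pdim{i}$ exactly cancels this factor -- is precisely what those references do. Two small things done well: you correctly identify that semisimplicity enters twice (in the existence of the isotypic decomposition and in the non-degeneracy of the trace pairing), and you correctly observe that base-change independence is automatic, which is needed for the unlabeled-coupon convention of the paper to be meaningful. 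The reduction to a single strand and an empty ambient diagram via locality of the graphical calculus is also sound, though one could equivalently just carry the $\Gamma$-coupons and the cable along as spectators throughout.
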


In other instances, disconnected subgraphs can be glued together along a pair of dual coupons:
\begin{lemma}[{\cite[Lem. 1.3]{balskir}}]\label{rule_dissolve_verts}
We have
\begin{equation*}
    \pica{1}{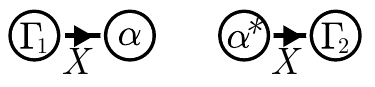} \qquad = \qquad \pica{1}{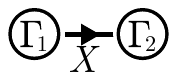}.
\end{equation*}
Note that the coupons $\Gamma_1$, $\Gamma_2$ can themselves be replaced by string diagrams, making this a statement about two disconnected closed diagrams, each having a coupon labeled by one of two bases that are dual to one another.
\end{lemma}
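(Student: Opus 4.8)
The plan is to reduce the statement to the purely linear-algebraic completeness relation for dual bases, exploiting that the identity is local — it only involves small neighbourhoods of the two coupons carrying the dual bases — and then to feed the outcome back into the graphical calculus. Write $D_1$ for the first of the two disconnected closed diagrams (the one containing $\Gamma_1$ together with a coupon labelled by a basis $(\alpha_i)$ of a hom-space $\homsetin{\ccat}{X}{Y}$) and $D_2$ for the second (containing $\Gamma_2$ together with a coupon labelled by the dual basis $(\alpha_i^*)\subset\homsetin{\ccat}{Y}{X}$, dual with respect to the pairing \eqref{eq_pairing} and the identification \eqref{eq_homspacedual_serre}). We must show that $\sum_i D_1[\alpha_i]\,D_2[\alpha_i^*]$ equals the single connected closed diagram $D$ obtained by erasing the two special coupons and joining the strands thus freed.

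First I would use that the evaluation of a closed diagram on $\mathbb{S}^2$ — a well-defined scalar by \cite[Lem. 2.9]{tv} — is multilinear in the labels of its coupons. Holding everything in $D_2$ fixed except the label of the $\alpha^*$-coupon therefore defines a linear functional $\varphi\mapsto D_2[\varphi]$ on $\homsetin{\ccat}{Y}{X}$; by non-degeneracy of \eqref{eq_pairing} there is a unique $g\in\homsetin{\ccat}{X}{Y}$ with $D_2[\varphi]=\mathrm{Tr}(\varphi\circ g)$. Geometrically, $g$ is just ``$D_2$ with the $\alpha^*$-coupon cut out'': the remaining open tangle, after the strand-bendings prescribed by \eqref{eq_homspacedual_serre}, represents a morphism $X\to Y$.

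Next, linearity of $D_1$ in the $\alpha$-coupon together with the dual-basis completeness relation $g=\sum_i\mathrm{Tr}(\alpha_i^*\circ g)\,\alpha_i$ — which is precisely the computation already carried out in the proof of \Cref{prop_trace_homendos}, using $\mathrm{Tr}(\alpha_i^*\circ\alpha_j)=\delta_{ij}$ — gives
\begin{equation*}
    \sum_i D_1[\alpha_i]\,D_2[\alpha_i^*] \;=\; \sum_i D_1[\alpha_i]\,\mathrm{Tr}(\alpha_i^*\circ g) \;=\; D_1\!\left[\,\sum_i\mathrm{Tr}(\alpha_i^*\circ g)\,\alpha_i\,\right] \;=\; D_1[g].
\end{equation*}
Finally, $D_1[g]$ is the diagram in which the $\alpha$-coupon of $D_1$ has been relabelled by the tangle ``$D_2$ minus its $\alpha^*$-coupon''; read off directly, this is the connected diagram $D$ obtained by gluing $D_1$ and $D_2$ along the freed strands, as claimed. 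That the two freed boundary patterns fit together with matching orientations is exactly the content of the zig-zag relations underlying \eqref{eq_homspacedual_serre}.

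The multilinearity of diagram evaluation and the completeness relation are immediate; the one point demanding genuine care — and hence the main obstacle — is the bookkeeping in the first and last steps: showing that ``cutting out a disk around a coupon'' yields a morphism independent of the isotopy class used (here sphericality enters), and verifying that substituting $g$ back into $D_1$ reproduces exactly the prescribed gluing, with all strand orientations and the $\homsetin{\ccat}{Y}{X}^*\cong\homsetin{\ccat}{X}{Y}$ conventions kept consistent. A fully detailed treatment of this kind of argument is \cite[Lem. 1.3]{balskir}.
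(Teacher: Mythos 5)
Your argument is correct, but it is a genuinely different route from the one the paper takes. The paper proves \Cref{rule_dissolve_verts} as a corollary of \Cref{rule_dissolve_edges}: starting from the glued diagram, it inserts the resolution $\sum_{i\in I} d_i\,\alpha\!-\!i\!-\!\alpha^*$ of the identity on the bundle of connecting strands (the content of \Cref{rule_dissolve_edges}), and then observes that, because after cutting along the $i$-strand each of the two resulting closed sub-diagrams is a morphism $\one\to i$ (respectively $i\to\one$), Schur's lemma kills every term with $i\neq\one$; only the $i=\one$ summand survives, with $d_\one = 1$, giving precisely the left-hand side.

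You instead avoid \Cref{rule_dissolve_edges} entirely. You use multilinearity of diagram evaluation in the coupon labels, reinterpret $D_2$ with its coupon excised as a single morphism $g\in\homsetin{\ccat}{X}{Y}$ via non-degeneracy of the pairing \eqref{eq_pairing}, and then invoke the linear-algebraic completeness relation $g=\sum_i \mathrm{Tr}(\alpha_i^*\circ g)\,\alpha_i$ (the same computation as in the proof of \Cref{prop_trace_homendos}) to conclude $\sum_i D_1[\alpha_i]\,D_2[\alpha_i^*]=D_1[g]$. Both proofs are sound. The paper's is shorter once \Cref{rule_dissolve_edges} is in hand, and fits the logical architecture of the section, where \Cref{rule_dissolve_edges} is the basic input and everything else is derived from it. Your proof is more self-contained and more clearly exhibits that the result is really just the dual-basis completeness relation dressed up graphically; the cost is exactly the bookkeeping you flag, namely that ``cutting out a disk around a coupon'' produces a well-defined morphism and that plugging $g$ back into $D_1$ reproduces the prescribed gluing with all orientations and the identification \eqref{eq_homspacedual_serre} kept straight. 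That step does deserve a sentence saying explicitly that this is where sphericality (well-definedness of evaluation on $\mathbb{S}^2$) and the zig-zag relations are used, as you note.
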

\begin{proof}
This is a corollary of \Cref{rule_dissolve_edges}.
By Schur's Lemma, any morphism $\one \to i$, for $i\neq \one$ simple, is zero.
Since $\ccat$ is a fusion category, $\one$ is simple, so in the sum
\begin{equation*}
    \sum_i \pdim{i} \pica{1}{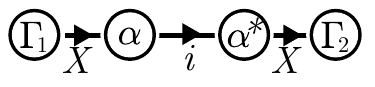} \quad = \quad  \pica{1}{chap_cats_pics/glue_graphs_after.pdf}
\end{equation*}
(applying \Cref{rule_dissolve_edges}), only the term $i = \one$ survives.
\end{proof}

\begin{lemma}\label{rule_mergebases}
Let $X,Y\in \ccat$ and let $(\varphi_{i,\alpha})_{\alpha=1\dots [i:X]}$ be, for each $i\in I$, a basis of $\homsetin{\ccat}{i}{X}$, and similarly $(\psi_{i,\alpha})_{\alpha=1\dots [i:Y]}$ be a basis of $\homsetin{\ccat}{i}{Y}$.
Denote by $\varphi_i^\alpha$, $\psi_i^\alpha$ the elements of the dual bases.
Then the maps 
\begin{equation*}
    \left( \psi_{i,\alpha} \circ \varphi_i^\beta \right)_{i\in I, \alpha=1\dots [i:X], \beta=1\dots [i:Y]}
\end{equation*}
form a basis of $\homsetin{\ccat}{X}{Y}$, and the dual basis of $\homsetin{\ccat}{Y}{X}$ is given by
\begin{equation*}
    \left( d_i\; \varphi_{i,\beta} \circ \psi_i^\alpha \right)_{i\in I, \alpha=1\dots [i:X], \beta=1\dots [i:Y]}.
\end{equation*}
This implies the following rule for string diagrams, for any morphism $\Gamma$:
\begin{equation*}
    \sum_i d_i \pica{1}{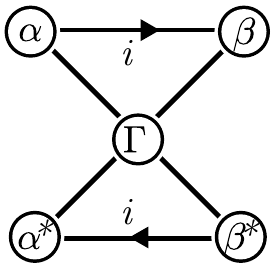} = \pica{1}{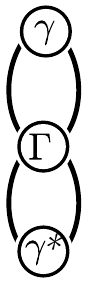}.
\end{equation*}
\end{lemma}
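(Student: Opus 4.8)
The plan is to establish the basis claims first, and then derive the string diagram identity as a formal consequence together with a careful bookkeeping of dualizing factors. First I would verify that the proposed family $(\psi_{i,\alpha}\circ\varphi_i^\beta)$ spans $\homsetin{\ccat}{X}{Y}$. Since $\ccat$ is semisimple, the identity on any object $W$ decomposes as a sum over $i\in I$ of compositions of the form $(\text{inclusion } i\to W)\circ(\text{projection } W\to i)$; concretely, using a basis $(\eta_{i,\gamma})$ of $\homsetin{\ccat}{i}{W}$ and its dual basis $(\eta_i^\gamma)$ of $\homsetin{\ccat}{W}{i}$, one has $\id_W=\sum_{i,\gamma}\eta_{i,\gamma}\circ\eta_i^\gamma$ (this is exactly the content of \Cref{rule_dissolve_edges} applied with trivial surrounding diagram, or can be read off from non-degeneracy of the pairing \eqref{eq_pairing}). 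Applying this to $W=X$ and composing with an arbitrary $f\in\homsetin{\ccat}{X}{Y}$ writes $f=\sum_{i,\beta}(f\circ\varphi_{i,\beta})\circ\varphi_i^\beta$, and since $f\circ\varphi_{i,\beta}\in\homsetin{\ccat}{i}{Y}$ expands in the basis $(\psi_{i,\alpha})$, we get $f$ in the span of the $\psi_{i,\alpha}\circ\varphi_i^\beta$. A dimension count, $\sum_i [i:X][i:Y]=\dim\homsetin{\ccat}{X}{Y}$ (again by semisimplicity), shows the spanning family is a basis.

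Next I would identify the dual basis with respect to the pairing \eqref{eq_pairing}, i.e. compute $\mathrm{Tr}\bigl((\psi_{j,\gamma}\circ\varphi_j^\delta)\circ(d_i\,\varphi_{i,\beta}\circ\psi_i^\alpha)\bigr)$ and check it equals $\delta_{ij}\delta_{\alpha\gamma}\delta_{\beta\delta}$. By cyclicity of the trace (sphericity) this equals $d_i\,\mathrm{Tr}\bigl(\psi_i^\alpha\circ\psi_{j,\gamma}\circ\varphi_j^\delta\circ\varphi_{i,\beta}\bigr)$. The composite $\psi_i^\alpha\circ\psi_{j,\gamma}$ lands in $\homsetin{\ccat}{j}{i}$, which by Schur vanishes unless $i=j$ and then equals $\frac{1}{d_i}\langle\psi_i^\alpha,\psi_{i,\gamma}\rangle\,\id_i=\frac{1}{d_i}\delta_{\gamma}^{\alpha}\,\id_i$ — here the normalization $1/d_i$ arises precisely because the trace pairing on $\homsetin{\ccat}{i}{i}\cong\field$ sends $\id_i\mapsto d_i$, so to reproduce the Kronecker pairing of the dual bases we must divide; this is the source of the factor $d_i$ in the statement. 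Similarly $\varphi_j^\delta\circ\varphi_{i,\beta}$ for $i=j$ contributes $\frac{1}{d_i}\delta^\delta_\beta\,\id_i$. Substituting, the $d_i$ cancels one $1/d_i$ and $\mathrm{Tr}(\id_i)=d_i$ cancels the other, leaving $\delta_{ij}\delta^\alpha_\gamma\delta^\delta_\beta$ as required.

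Finally, the string-diagram identity is just the graphical translation of the basis-change formula: inserting $\id_X=\sum_{i,\beta}\varphi_{i,\beta}\circ\varphi_i^\beta$ into an arbitrary diagram $\Gamma$ at the relevant strand and recognizing the resulting pair of coupons as a dual-basis pair $\psi_{i,\alpha}\circ\varphi_i^\beta$ and $d_i\,\varphi_{i,\beta}\circ\psi_i^\alpha$ yields exactly the claimed picture, once one matches the summation conventions of \eqref{eq_coev_sum_conv} (the index $\alpha$ and its dual are implicitly summed). The main obstacle I anticipate is purely one of normalization bookkeeping: tracking the several appearances of $d_i$ — one from the explicit $d_i$ in the dual basis, one from $\mathrm{tr}(\id_i)=d_i$, and one hidden in how the dual basis of $\homsetin{\ccat}{i}{X}$ relates via \eqref{eq_homspacedual_serre} to $\homsetin{\ccat}{X}{i}$ — and confirming they combine to leave the clean factor $d_i$ (rather than $d_i^{\pm1}$ or $d_i^{\pm2}$) in front of the left-hand diagram. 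Everything else reduces to Schur's lemma and non-degeneracy of \eqref{eq_pairing}, both already available.
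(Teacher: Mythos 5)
Your argument is the ``direct computation using Schur's lemma'' the paper leaves to the reader, and the key step --- verifying that the trace pairing of $\psi_{j,\gamma}\circ\varphi_j^\delta$ with $d_i\,\varphi_{i,\beta}\circ\psi_i^\alpha$ equals $\delta_{ij}\delta_{\alpha\gamma}\delta_{\beta\delta}$ --- is carried out correctly. One slip to fix: with $\eta_i^\gamma$ the trace-dual of $\eta_{i,\gamma}$, the resolution of identity carries a weight, $\id_W=\sum_{i,\gamma}d_i\,\eta_{i,\gamma}\circ\eta_i^\gamma$ (this is \Cref{rule_dissolve_edges}, where the $\pdim{i}$ is not optional), not $\id_W=\sum_{i,\gamma}\eta_{i,\gamma}\circ\eta_i^\gamma$ as you wrote. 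This does not affect your spanning argument, since membership of $f$ in the span is insensitive to the scalar. As for the normalization worry raised in your final paragraph, the cleanest way to see the overall $d_i$ is not to re-track all the factors through a resolution-of-identity computation, but to read the diagram identity directly as the graphical transcription of the dual-basis statement via \Cref{conv_dual_verts_in_stringdiags}: the implicit sum in the right-hand box runs over the basis $\psi_{i,\alpha}\circ\varphi_i^\beta$ paired with its dual $d_i\,\varphi_{i,\beta}\circ\psi_i^\alpha$, and pulling the explicit scalar $d_i$ out of the lower coupon is exactly what produces the $\sum_i d_i$ on the left-hand side.
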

\begin{proof}
This follows by a direct computation using Schur's lemma.
\end{proof}

\begin{lemma}\label{rule_tracedbases}
Similarly, we have the following rule for all endomorphisms $\Gamma$ of any object $X\in \ccat$.
 \begin{equation*}
     \sum_i d_i \quad\pica{1}{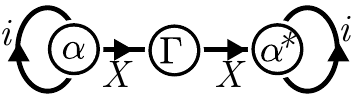} \quad=\quad \dim(\ccat) \quad \pica{1}{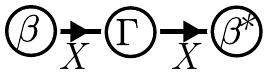}.
 \end{equation*}
\end{lemma}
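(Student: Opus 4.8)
The plan is to obtain this identity as a corollary of \Cref{rule_mergebases}, in exactly the way that \Cref{rule_dissolve_verts} was deduced from \Cref{rule_dissolve_edges}: apply the preceding ``merging'' rule to the two coupons carrying the dual bases, and then exploit the fact that the surrounding diagram is closed up into loops. Concretely, I would first read the left-hand picture as a closed string diagram in which a pair of coupons is labelled by dual bases $\varphi_{i,\alpha}$, $\varphi_i^\alpha$ of the hom-spaces built from $X$ and the simple object $i$, with the endomorphism $\Gamma$ sitting on the $X$-strand between them. Either of two equivalent moves then collapses the sum over the basis index $\alpha$: one may invoke the basis/dual-basis statement of \Cref{rule_mergebases} verbatim, or redo the short Schur's-lemma computation from its proof using the non-degeneracy of the pairing \eqref{eq_pairing} --- the latter being the route hinted at by the word ``similarly''.

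The effect of this move is to replace the two coupons together with the $i$-strand joining them by a single insertion of $\Gamma$ restricted to the $i$-isotypic component of $X$; equivalently, the combination $\sum_i \pdim{i}\,\varphi_{i,\alpha}\circ\varphi_i^\alpha$ is a resolution of the identity on $X$ up to the relevant normalization. Because the ambient diagram is \emph{closed}, however, this replacement does not merely dissolve the $i$-strand as in \Cref{rule_dissolve_edges}; it leaves behind an additional closed loop labelled by $i$. The final step is to evaluate that loop on $\mathbb{S}^2$, which yields the scalar $\pdim{i}=\dim_l(i)$ (sphericality is used here so the left and right traces agree and the value is unambiguous). Multiplying by the $\pdim{i}$ already present in the prefactor turns $\sum_{i\in I}\pdim{i}$ into $\sum_{i\in I}\pdim{i}^2 = \dim(\ccat)$, while what remains on the $X$-strand, after summing the isotypic contributions over $i\in I$, is precisely $\Gamma$ closed up as in the right-hand picture.

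I expect the only genuine obstacle to be bookkeeping of the two factors $\pdim{i}$: the one that \Cref{rule_mergebases} attaches to the dual basis must be kept separate from the one produced by evaluating the newly created closed $i$-loop, and one must check that together they contribute exactly one power of $\dim(\ccat)$ with no index double-counted and no stray $\pdim{i}^{-1}$ surviving from the Schur normalization $\varphi_i^\alpha\circ\varphi_{i,\beta}=\pdim{i}^{-1}\delta^\alpha_\beta\,\mathrm{id}_i$. Apart from this, the argument is routine and entirely parallel in flavour to the proofs of \Cref{rule_dissolve_verts,rule_mergebases}.
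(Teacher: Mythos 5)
The paper proves this purely by iterated use of \Cref{rule_dissolve_verts}, not \Cref{rule_mergebases}: the left-hand graph is first split by \Cref{rule_dissolve_verts} into three disconnected pieces; then the $\alpha$-coupons are dissolved (gluing two of those pieces back along a pair of dual vertices), which closes an $i$-labelled strand into a circle and produces a second factor $d_i$; finally the $\beta$-coupons are dissolved, after which the prefactor $\sum_i d_i^2$ factors out as $\dim(\ccat)$. The word ``similarly'' in the statement refers to the shape of the identity, not to it being a corollary of \Cref{rule_mergebases}.

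Your version has a genuine gap, and it is exactly the one you flag but do not close. The two moves you describe --- invoking \Cref{rule_mergebases} verbatim, or equivalently using the resolution of the identity $\sum_{i,\alpha} d_i\,\varphi_{i,\alpha}\circ\varphi_i^\alpha=\id_X$ --- both \emph{absorb} the prefactor $\sum_i d_i$. After that move there is no $d_i$ left in front to multiply against the $d_i$ from the newly created closed loop, so the final tally $\sum_i d_i\cdot d_i=\dim(\ccat)$ does not cohere as you have written it. In the paper's proof the two powers of $d_i$ are both visible \emph{before} the $i$-sum is performed: one is the original prefactor and the second is created by a closed $i$-circle coming from an application of \Cref{rule_dissolve_verts}; only the last application of \Cref{rule_dissolve_verts} makes the remaining diagram $i$-independent, at which point $\sum_i d_i^2=\dim(\ccat)$ factors off. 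If you insist on the \Cref{rule_mergebases} route, you would have to unpack the Schur normalization one level further so that both powers of $d_i$ appear explicitly before summing --- at which point you are essentially reproducing the paper's iterated \Cref{rule_dissolve_verts} argument.
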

\begin{proof}
By \Cref{rule_dissolve_verts}, the left-hand side is equal to
 \begin{equation*}
     \sum_i d_i \quad\pica{1}{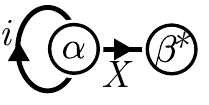}\;\pica{1}{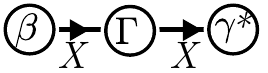}\;\pica{1}{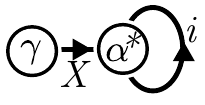}.
 \end{equation*}
We again use \Cref{rule_dissolve_verts} to eliminate the vertices labeled by $\alpha$. 
In this way, the strands labeled by $i$ form a circle, which brings an additional factor $d_i$ to the term:
  \begin{equation*}
     \sum_i d_i^2 \quad\pica{1}{letter_pics/tracedbases_c2.pdf}\;\pica{1}{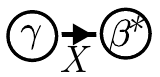}.
 \end{equation*}
After applying \Cref{rule_dissolve_verts} once more to dissolve the coupons labeled by $\beta$, we obtain the right-hand side in \Cref{rule_tracedbases}.
\end{proof}

Coupons labeled by a pair of dual bases will appear excessively in the string diagrams we consider, so we will employ the following convention:
\begin{conv}\label{conv_dual_verts_in_stringdiags}
When drawing a string diagram with some unlabeled vertices, we implicitly label each pair of dual vertices by a pair of dual bases and sum over basis elements, as in the following example, where a sum over $\alpha$ is implied:
\begin{equation*}
    \pica{1}{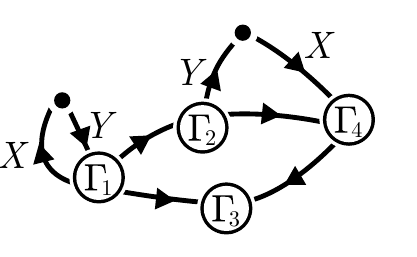} = \pica{1}{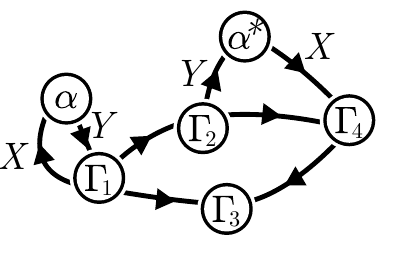}.
\end{equation*}
Of course, this only makes sense if the labeling of the strands uniquely determines which vertices are labeled by dual morphisms.
\end{conv}

The equivariant Frobenius-Schur indicators in the sense of Kashina, Sommerh\"auser and Zhu \cite{ksz_higher_fs, sz-congruencesubgroup} and Ng and Schauenburg  \cite{ns-basic,ns-exponents,ns-quasihopf,ns-general}, can be conveniently expressed using the graphical calculus in $\ccat$:

\begin{defn}[{\cite[Def. 2.1]{ns-general}}] \label{defn_fs_general}
Let $\ccat$ be a spherical fusion category, and let $X \in Z(\ccat)$ be an object in the Drinfeld center of $\ccat$. 
Then for $n \in \nats$, $r \in \{0,\dots,n\}$, the $r$-twisted $n$-th (generalized) Frobenius-Schur indicator of an object $V \in \ccat$ is given by the scalar
\begin{equation*}
    \nu_{n,r}^X(V) = \mathrm{Tr} \left(\pic{9}{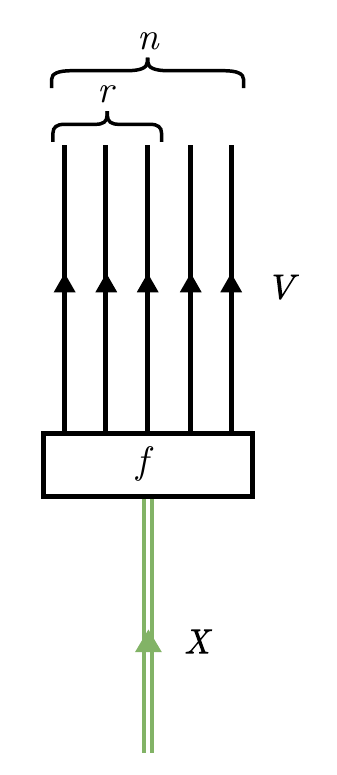} \qquad \mapsto \qquad \pic{9}{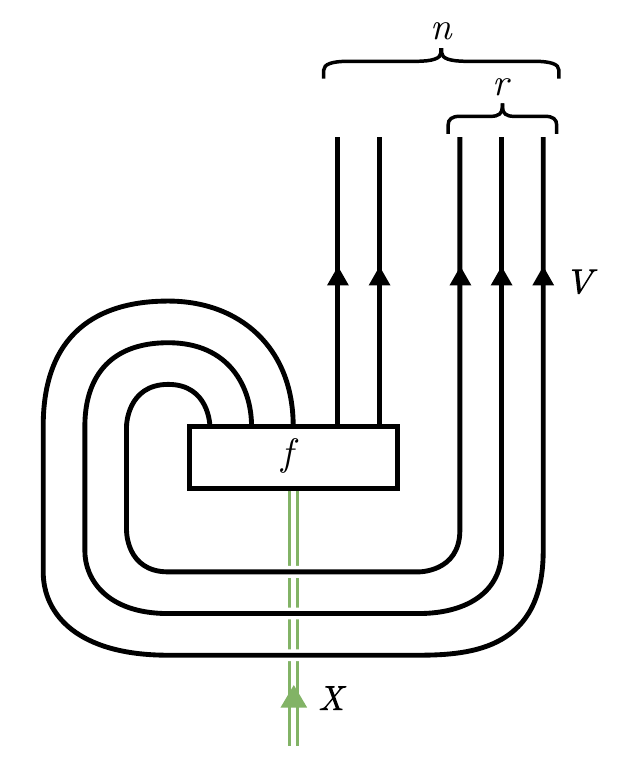} \right) \in \field,
\end{equation*}
where $f\in \homsetin{\ccat}{X}{V^{\otimes n}}$.
\end{defn}

The Frobenius-Schur indicator is thus the trace over an automorphism $E^{X,V}_{n,r} : \homsetin{\ccat}{X}{V^{\otimes n}} \to \homsetin{\ccat}{X}{V^{\otimes n}}$, where we suppressed the forgetful functor $Z(\ccat)\to \ccat$, applied to $X$, in the notation.
These pictures establish the use of double-stroke green lines for objects in the Drinfeld center. 
Crossings of strands are then to be interpreted using the half-braiding of $X\in Z(\ccat)$.
For an arbitrary pair $(n,r)\in \ints \times \ints$, $E^{X,V}_{n,r}$ is given by 
\begin{equation*}
    E^{X,V}_{n,r} (f) := (\mathrm{ev}_{V^{\otimes r}} \otimes V^{\otimes n}) \circ ((V^*)^{\otimes r} \otimes f \otimes V^{\otimes r}) \circ (\beta_{(V^*)^{\otimes r}} \otimes V^{\otimes r}) \circ (X \otimes \mathrm{coev}_{(V^*)^{\otimes r}}),
\end{equation*}
where we use the convention $V^{\otimes -1} \cong V^*$, and by $\beta_-$, we denote the half-braiding $X\otimes - \to -\otimes X$ of $X$.

\begin{rem}
In the definition of $E^{X,V}_{n,r}$, we omitted the pivotal structure, as well as half-braidings and associativity morphisms. 
Ng and Schauenburg had to be much more careful with their definition, but they showed that the so-defined indicators are preserved under (pivotal) tensor-equivalence (in detail in \cite{ns-basic} for the case $X = \one$). 
Once this has been done, we may write down the definition in the setting of a strict pivotal monoidal category (as we effectively did) and need not worry about the ambiguities of where to insert the structure maps.
\end{rem}

The definition in terms of traces of endomorphisms of hom-spaces can be awkward to work with.
\Cref{prop_trace_homendos} shows that we can express the indicator as the value of the string diagram
\begin{equation} \label{eq_fs_def_basissum}
    \nu_{n,r}^X(V) = \pica{1}{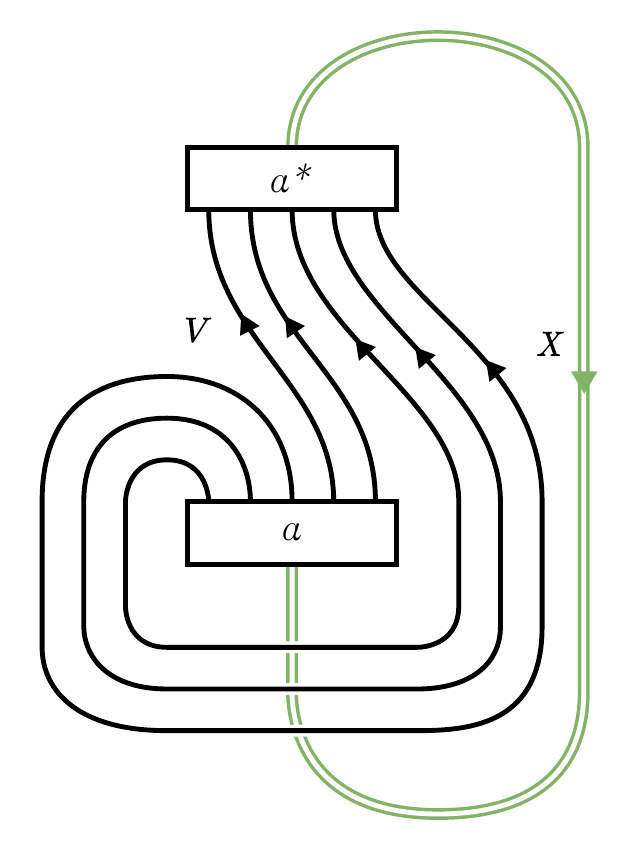},
\end{equation}
which involves a sum over dual bases $\alpha$, $\alpha^*$.
\section{A State-Sum Model for Manifolds with Free Boundaries}\label{sec_tvgen}
In this section, we describe a procedure, based on the Turaev-Viro state sum construction in the formulation of \cite{tv}, to associate a number $|\bord{M}| \in \field$ to a (compact oriented) 3-manifold $\bord{M}$, possibly with boundary, with additional data:
\begin{itemize}
    \item A framed link embedded in the interior of $\bord{M}$, whose components are labeled by objects in $Z(\ccat)$.
    \item A link embedded on the boundary surface of $\bord{M}$, whose components are labeled by objects in $\ccat$. (By a link we here mean an embedding $(\mathbb{S}^1)^{\sqcup n} \to \partial\bord{M}$, i.e. there are no crossings.)
\end{itemize}
In contrast to the usual setting of the Turaev-Viro construction, the boundary $\partial \bord{M}$ does not get assigned a vector space, and it does not make sense to glue together manifolds along boundary components.
We refer to the type of boundary we consider here as a \emph{free} or physical \emph{boundary} -- as opposed to a \emph{gluing boundary}.

In order to compute the scalar $|\bord{M}| \in \field$, an auxiliary structure called a \emph{skeleton} is embedded into the 3-manifold $\bord{M}$ \cite[Sec. 11.5.1]{tv}.
The skeleton comes with a decomposition into finitely many oriented 2-cells, edges between them, and vertices.
The picture \eqref{eq_ball_around_vert} illustrates how a skeleton may locally look around a vertex.
We moreover require that the boundary of $\bord{M}$ is a subset of the skeleton, and that the 3-cells (the connected components of the complement of the skeleton in $\bord{M}$) are open balls.

If the manifold $\bord{M}$ comes equipped with embedded links, we call a skeleton of $\bord{M}$ a \emph{graph skeleton} if the embedded links restrict to the skeleton, and only meet the skeleton's edges transversely, at special vertices called switches. 
Locally, the neighborhood of a switch looks as follows:
\begin{equation*}
    \pica{1}{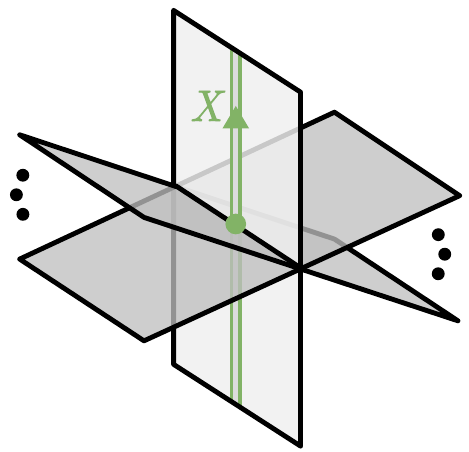}
\end{equation*}
We further require that the orientations of the two 2-cells which a graph intersects in the neighborhood of a switch are compatible (in that they define an orientation on the union of the cells).
The embedded links split some 2-cells into several \emph{faces}: In the picture of the switch above, there are six 2-cells, but eight faces.

We now describe how the number $|\bord{M}|$ is obtained as a state-sum, recalling and slightly extending the construction in \cite[Chap. 15]{tv}.
\begin{enumerate}
    \item Given a 3-manifold $\bord{M}$ with embedded links, pick a graph skeleton $P$.
    For now, fix a \emph{coloring} $c$, that is, a map of sets 
    \begin{equation*}
        c : \mathrm{Fac}(P) \to I,
    \end{equation*}
    where $\mathrm{Fac}(P)$ denotes the set of faces of the skeleton $P$ and $I$ is the chosen set of representatives of simple objects of the spherical fusion category $\ccat$. The objects $c(f)$ are often called \emph{state-sum variables}, as we will later sum over all possible colorings $c$.
    
    \item By a \emph{half-rim} we mean a vertex or a switch of $P$ together with a germ of an adjacent edge of $P$ (in this case we speak of a \emph{half-edge}) or an adjacent component of an embedded link.
    To each half-rim $e$, we associate a vector space $H_c(e)$ as follows.
    If $e$ is a half-edge, the vector space is constructed from the value of the coloring $c$ on the faces adjacent to $e$, their cyclic order around $e$, their orientations, and the orientation of $e$. 
    The following example makes the construction clear. (Here, $Y_1, \dots, Y_4 \in \ccat$ are the objects assigned to the faces by the coloring $c$.)
        \begin{equation}\label{eq_tv_multmod_ex}
            H_c\left(\pica{1}{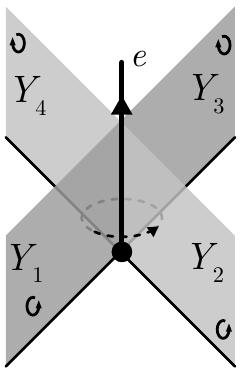}\right) \cong  \homsetin{\ccat}{\one}{Y_{1} \otimes Y_{2}^* \otimes Y_{3} \otimes Y_{4}^*}.
        \end{equation} 
        
    If $e$ is a switch together with a germ of a link segment, then $e$ has precisely two adjacent faces $b_l$ and $b_r$. 
    So locally, a neighborhood of $e$ looks as follows.
        \begin{equation*}
            \pica{1}{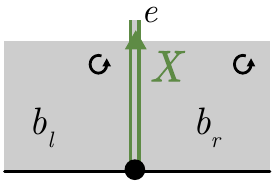}
        \end{equation*}
    The notation $b_l$ and $b_r$ for the faces is not arbitrary: 
    We can always draw a neighborhood of the half-rim $e$ such that the link segment of $e$ points upwards, and such that both faces are oriented positively on the paper, as is done in the picture above. This is possible because the two faces come from a single oriented 2-cell, and thus their orientations are compatible.
    Then it is clear that there is a well-defined notion of a left face (this we denote by $b_l$) and a right face (this we denote by $b_r$).
    The associated vector space is 
        \begin{equation}\label{eq_statespace_insertionrim}
            H_c (e) = \homsetin{\ccat}{\one}{c(b_r) \otimes X^* \otimes c(b_l)^*},
        \end{equation}
    if the half-rim $e$ is oriented away from the switch and
    \begin{equation}\label{eq_statespace_insertionrim1}
            H_c (e) = \homsetin{\ccat}{\one}{c(b_r) \otimes X \otimes c(b_l)^*},
        \end{equation}
    if it is oriented towards the switch.

    We may also assign a vector space to a vertex or switch $v$ in $P$: We set
    \begin{equation}
        H_c (v) = \bigotimes_{e \in \mathbf{R}(v)} H_c(e).
    \end{equation}
    Here, $e$ runs over half-rims adjacent to $v$, and $\otimes$ denotes the (unordered) tensor product of vector spaces.
    
    Finally, we assign a vector space to the 3-manifold $\bord{M}$ (together with the  chosen graph skeleton) as a whole.
    \begin{equation}
        H_c (\bord{M}) = \bigotimes_{e \in \mathbf{R}} H_c (e),
    \end{equation}
    where the product runs over all half-rims.

    \item There is yet another way to decompose the space $H_c(\bord{M})$.
    Note that every rim $r$ (this is an edge of $P$ or a segment of an embedded graph) gives rise to precisely two half-rims $e$, $e'$, which are opposite to one another, so we may rewrite the product over half-rims as a product over internal rims:
    \begin{equation*}
        H_c(\bord{M}) = \bigotimes_{r} H_c(e) \otimes H_c(e').
    \end{equation*}
    With the definitions above, the vector spaces associated to opposite half-rims are in duality:
    \begin{equation}\label{eq_tv_dual_halfrims}
        H_c(e) \cong \left( H_c(e') \right)^*.
    \end{equation}
    Using the duality \eqref{eq_tv_dual_halfrims}, this allows us to define a distinguished non-zero vector 
    \begin{equation}\label{eq_tv_ast_coev}
        \ast_r = \mathrm{coev}(1) \in H_c(e) \otimes H_c(e'),
    \end{equation}
    associated to a rim $r$ whose half-rims are denoted $e$ and $e'$,
    and hence a distinguished vector in $H_c(\bord{M})$:
    \begin{equation}\label{eq_tv_disting_def}
        \ast_c = \otimes_r \ast_r \in H_c (\bord{M}).
    \end{equation}
    
    \item We now assign, to every internal vertex or switch $v$ of $P$, a linear map
    \begin{equation*}
        \Gamma_c(v) : H_c(v) \to \field
    \end{equation*}
    using the evaluation of string diagrams on spheres.
    (In the notation of \cite{tv}, $\Gamma_c(v)$ is written as $\mathbb{F}_\ccat(\Gamma^c_v)$.)
    This requires different treatments for vertices and switches.
    The discussion follows \cite[Sec. 15.5.1]{tv} closely, the formalism is not changed by the presence of free boundaries.
    \begin{itemize}
        \item If $v$ is a vertex of the skeleton $P$, draw a small sphere $B \cong \mathbb{S}^2$ in $\bord{M}$ around $v$.
        If $v$ lies on the boundary, embed a small neighborhood of $v$ into $\reals^3$ and draw the sphere there.
        The intersection $P \cap B$ is a graph on $B$:
        The faces adjacent to $V$ become lines in $P\cap B$, these are the edges of the graph (which we also call $\Gamma_c(v)$).
        The end points of the edges are vertices, and each vertex is located at an intersection point between a rim of $P$ and $B$.
        The orientations of the faces induce orientations of the edges of $\Gamma_c(v)$, and, since the graph skeleton is colored, the edges also inherit colors.
        This is best illustrated in an example:
        \begin{equation}\label{eq_ball_around_vert}
            \pica{1}{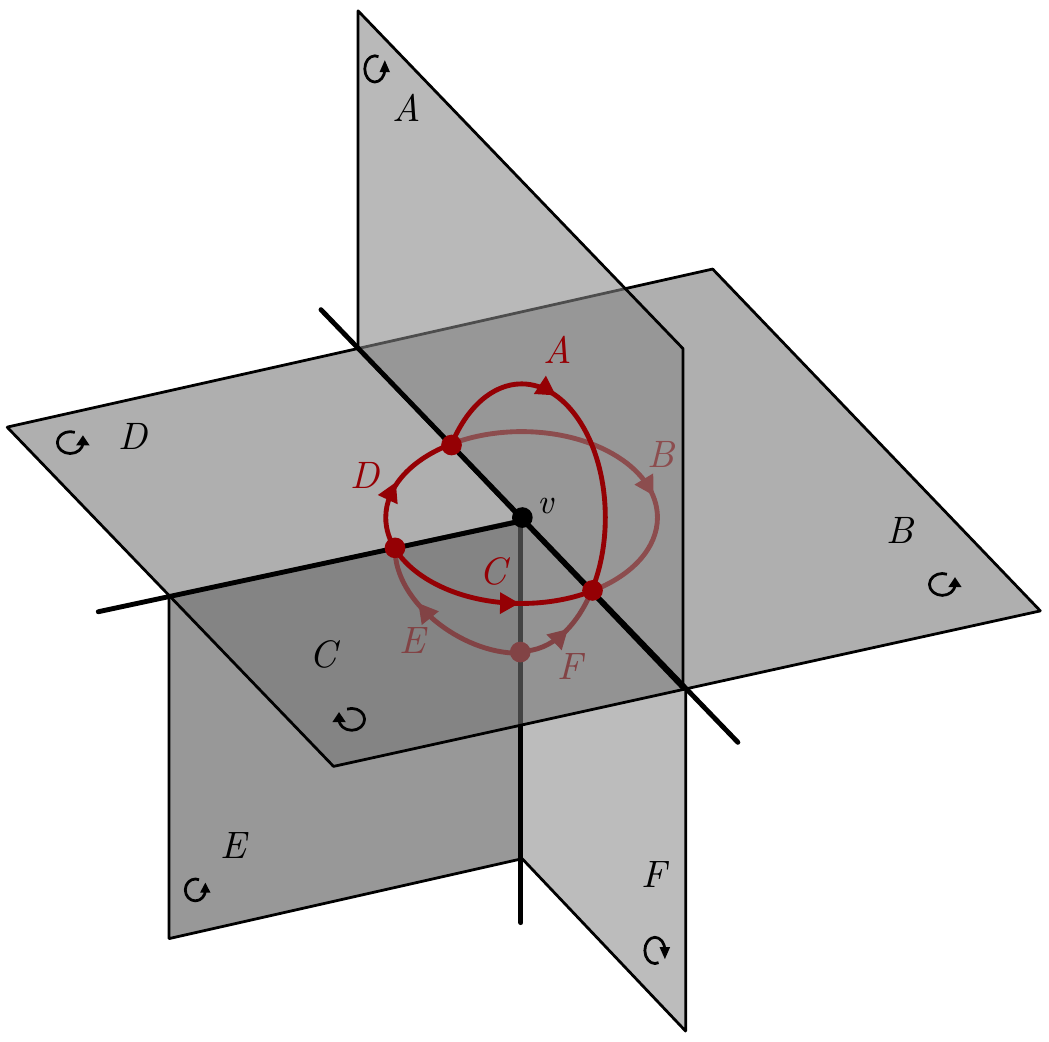}
        \end{equation}
        In the picture, $A$, $B$, $C$, $D$, $E$ and $F$ are elements of $I$ that form a coloring of faces adjacent to the vertex $v$ of a graph skeleton.
        Drawn in red is the graph $\Gamma_c(v)$, which is located on the surface of a small ball around $v$.
        
        Consider a vertex $x$ of $\Gamma_c(v)$ with adjacent edges colored by objects $Y_1, \dots, Y_l$.
        Let $\epsilon_i = -$ if the $i$-th edge is oriented towards the vertex and $\epsilon_i = +$ otherwise.
        We want to read the graph $\Gamma_c(v)$ as a string diagram, so $x$ has to be labeled by a morphism in $\homsetin{\ccat}{\one}{Y_{1}^{\epsilon_1} \otimes \cdots \otimes Y_{l}^{\epsilon_{l}}}$. This space is isomorphic to $H_c(e)$, where $e$ is the half-rim adjacent to $v$ that corresponds to the vertex $x$.
        
        In other words: A choice of vector in $H_c(e)$ for each half-rim $e$ adjacent to $v$ corresponds to a labeling of vertices in the graph $\Gamma_c(v)$ by appropriate morphisms.
        We can then interpret $\Gamma_c(v)$ as a string diagram drawn on a sphere -- which can be evaluated to a scalar in the graphical calculus for the spherical fusion category $\ccat$.
        All together, this yields a map $\Gamma_c(v) : H_c(v) \to \field$.
        
        A more detailed discussion is presented in \cite[Sec. 13.1.1]{tv}.
        
        \item If $v$ is a switch as in Figure \eqref{pic_switch_labels}, then the procedure is the same, except that we add another line (in green) to the graph we evaluate. 
        Since we know the neighborhoods of switches explicitly, we can make this concrete:
        Assume, for the moment, that $v$ does not lie in the boundary of $\bord{M}$. 
        A neighborhood of $v$ looks as follows, where $A,B,C,D,E_1,\dots,E_\mu,F_1,\dots,F_\nu \in I$ for $\mu,\nu \in \nats\cup\{0\}$ and $X \in Z(\ccat)$.
        \begin{equation}\label{pic_switch_labels}
            \pica{1}{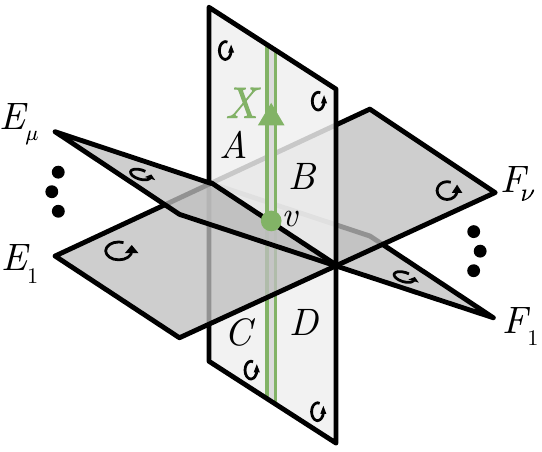}
        \end{equation}
        
        $\Gamma_c(v) : H_c(v) \to \field$ is then given by the map
        \begin{equation}\label{pic_switch_eval}
            \Gamma_c(v) = \pica{1}{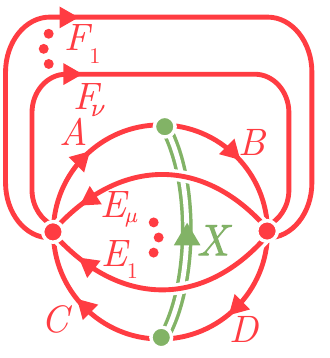},
        \end{equation}
        where again, we interpret a string diagram with unlabeled vertices as a linear map from $H_c(v)$ to $\field$.
        The crossings in the diagram are evaluated using the half-braiding associated with $X \in Z(\ccat)$.
        Of course, the orientations of adjacent faces can be different from those indicated in the picture \eqref{pic_switch_labels}. 
        In this case, the orientations of the corresponding edges in the graph \eqref{pic_switch_eval} change accordingly.
        
        If $v$ does lie on the free boundary, then the defect line is labeled by an object $V \in \ccat$ rather than in the Drinfeld center of $\ccat$ and in the above picture \eqref{pic_switch_labels}, we necessarily have $\mu = 0$.
        This means that in \eqref{pic_switch_eval}, there are no crossings in the diagram, which is important because $V$ does not come with a half-braiding.
    \end{itemize}
    
    \item We now define the scalar $|\bord{M}|$ associated to the manifold $\bord{M}$.
    The sum over $c$ is a sum over all possible colorings.
    \begin{equation}\label{eq_tv_preblock_fixcol}
        \left\lvert \bord{M} \right\rvert := \dim(\ccat)^{-K} \sum_{c} \dim(c)  \left( \otimes_{v} \Gamma_c(v)\right)(\ast_c) \in \field.
    \end{equation}
    Here, $K$ denotes the number of 3-cells in $\bord{M}$, that is, the number of connected components of $\bord{M} \setminus P$.
    The \emph{dimension} of the coloring $c$ is defined as
    \begin{equation}\label{eq_colordim}
        \dim(c) := \prod_{f \in \mathrm{Fac}(d)} \dim(c(f))^{\chi(f)},
    \end{equation}
    where $\chi(f)$ denotes the Euler characteristic of the face $f$.
    In the example we will consider, all faces are topologically disks, so $\chi(f)$ is always 1.
\end{enumerate}

\begin{rem}
    A priori, the quantity $\left\lvert \bord{M} \right\rvert$ depends not only on the manifold data, but also on the choice of graph skeleton $P$. We do not make this explicit in the notation, and posit that it is indeed independent of $P$ (see \Cref{hyp_generaltft}). For $\partial \bord{M} = \emptyset$, this is well-known.
\end{rem}
\begin{rem}
    The formula \eqref{eq_tv_preblock_fixcol} makes clear why we need to work with a \emph{finite} category $\ccat$:
    If there were an infinite number of isomorphism classes of simple objects in $\ccat$, there would also be infinitely many possible colorings, rendering the sum over $c$ ill-defined.
\end{rem}

\section{Computing Frobenius-Schur Indicators from decorated Manifolds} \label{sec_tvfs}

In this section, we will show by direct computation that for a suitable manifold $\bord{T}_{n,r}^{X,V}$ with $V\in \ccat, X\in Z(\ccat)$, the state-sum $|\bord{T}_{n,r}^{X,V}|$ from \Cref{sec_tvgen} is equal to the Frobenius-Schur indicator $\nu_{n,r}^{X}(V)$ from \Cref{defn_fs_general}.
The manifold $\bord{T}_{n,r}^{X,V}$ is a solid torus $\mathbb{D}^2 \times \mathbb{S}^1$, with an embedded $X$-labeled  non-contractible untwisted loop $(-\epsilon,\epsilon) \times \{0\} \times \mathbb{S}^1 \subset \mathbb{D}^2 \times \mathbb{S}^1$, and $n$ $V$-labeled lines on the boundary, which revolve around the torus in such a way that after a full rotation, they connect to the starting point of the line which is $r$ positions away from their own starting point.
For the sake of simplicity, we will here only consider the case $r=1$, where an explicit parametrization of the line labeled by $V$ embedded on the torus surface is given by 
\begin{equation}\label{eq_tvfs_torus_wilsonparam}
    \varphi \mapsto \left(\varphi, \, -n\varphi \right) \in \mathbb{S}^1 \times \mathbb{S}^1,
\end{equation}
for $\varphi \in [0,2\pi)$.
The picture below illustrates $\bord{T}_{n,r}^{X,V}$ for the case $n=4$, $r=1$.
\begin{equation}\label{eq_fs_man_ex}
    \bord{T}_{4,1}^{X,V} = \pica{1}{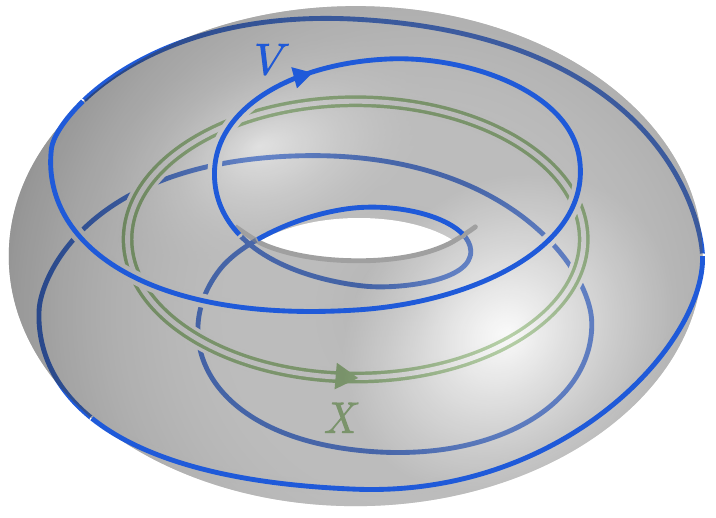}
\end{equation}

We now apply the construction from \Cref{sec_tvgen}.
As a first step, we endow $\bord{T}_{n,1}^{X,V}$ with a graph skeleton $P$.
For drawing purposes, we view the solid torus as a solid cylinder $\mathbb{D}^2 \times [0,1]$ with top and bottom disks identified via $(\xi,0) \sim (\xi,1)$ for $\xi \in \mathbb{D}^2$.
\begin{equation}\label{eq_fs_man_skel}
    \pica{0.7}{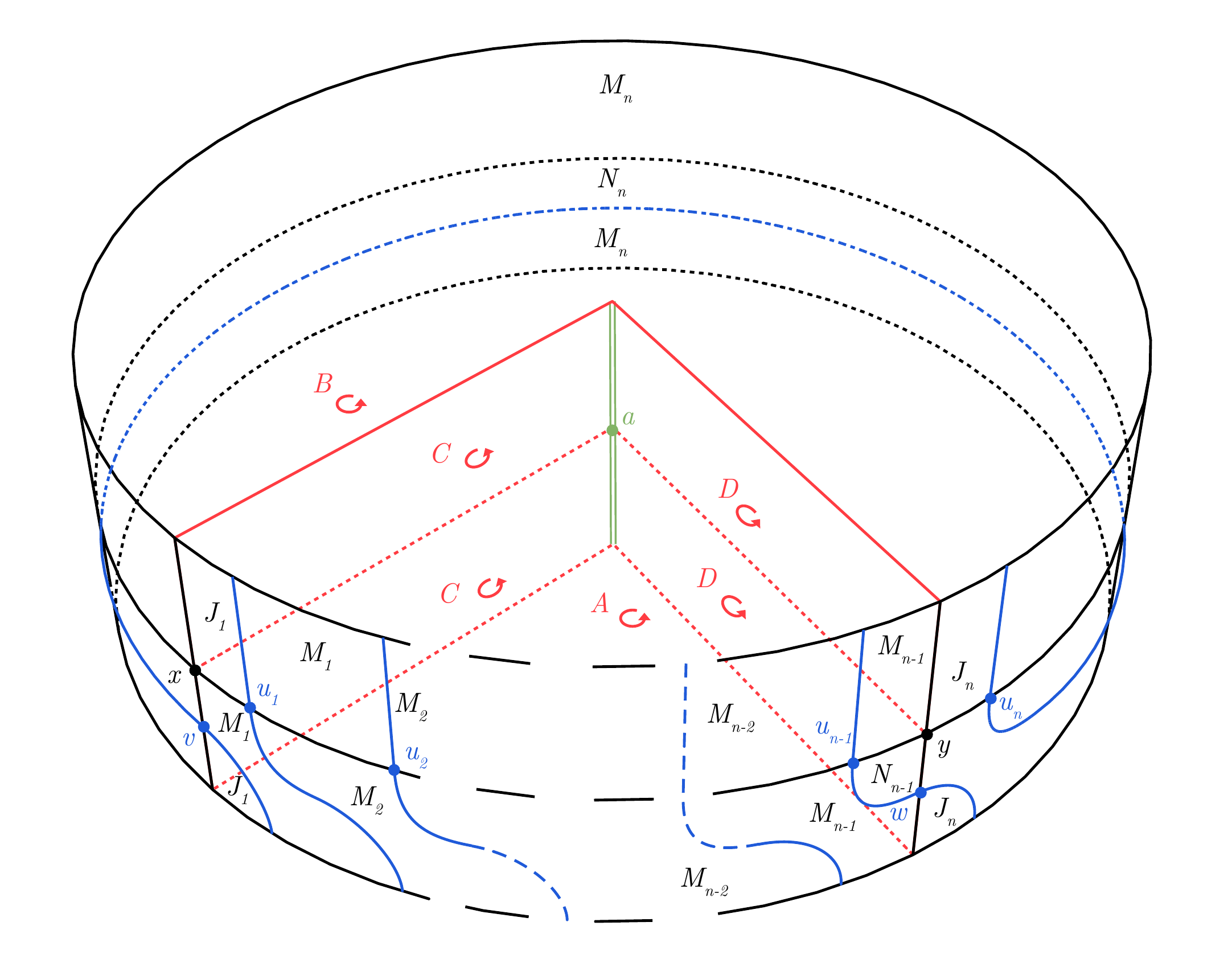}
\end{equation}
We describe the skeletal structure of $P$ in words:

There are $n+5$ vertices and switches in total, among them two black vertices $x,y$ (located on the boundary away from embedded lines), one green switch $a$ (situated on the $X$-labeled line) and $n+2$ blue switches $u_1,\dots,u_n,v,w$ (situated on the $n$ embedded lines on the boundary).

The graph skeleton $P$ has $n+8$ faces. 
Four of them lie in the interior and are drawn in red, labeled $A,B,C,D$.
The horizontal face $A$ is the flat polygon spanned by the vertices and switches $(a,x,u_1,\dots,u_{n-1},y)$.
Similarly, the other horizontal face $B$ is spanned by $(a,y,u_n,x)$.
The faces $C$ and $D$ are perpendicular to $A$ and $B$. 
They are oriented as indicated by the circular arrows.
On the boundary, the faces $J_1,J_n,N_{n-1},N_n$ and $M_i$, $i=1,\dots,n$, are drawn in black and oriented in accordance with the boundary.

Finally, we note that there are two 3-cells.

Similar graph skeleta $P$ exist for all integer values of $n$ and $r$.

\begin{proposition}\label{prop_tvfs}
The 3-manifold $\bord{T}_{n,r}^{X,V}$ with embedded links labeled by objects $X \in Z(\ccat)$ and $V \in \ccat$, endowed with the graph skeleton $P$ described above, evaluates under the state-sum model described in \Cref{sec_tvgen} to the Frobenius-Schur indicator from \Cref{defn_fs_general}:
\begin{equation*}
    |\bord{T}_{n,r}^{X,V}| = \nu_{n,r}^{X}(V).
\end{equation*}
\end{proposition}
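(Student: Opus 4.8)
The plan is to evaluate the state-sum of \Cref{sec_tvgen} directly on the graph skeleton $P$ of \eqref{eq_fs_man_skel} and to simplify the resulting expression with the graphical rules of \Cref{sec_cats} until it coincides with the string diagram \eqref{eq_fs_def_basissum}. First I would unwind the definitions: for each of the $n+5$ vertices and switches $x,y,a,u_1,\dots,u_n,v,w$, write out the graph $\Gamma_c(v)$ on a small sphere around $v$, as determined by the colors of the adjacent faces. The two black vertices $x$ and $y$ contribute ordinary string-diagram vertices built from the red faces $A,B,C,D$ and the neighboring boundary faces; the green switch $a$ contributes a graph carrying an extra $X$-strand together with a crossing evaluated by the half-braiding of $X\in Z(\ccat)$; and each blue switch $u_i$ (and $v$, $w$) contributes a graph carrying an extra $V$-strand with no crossing, since $V$ has no half-braiding --- exactly the situation described after \eqref{pic_switch_eval}.

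Next I would assemble the contraction $\bigl(\bigotimes_v \Gamma_c(v)\bigr)(\ast_c)$. By \eqref{eq_tv_ast_coev} and \Cref{conv_dual_verts_in_stringdiags}, pairing with $\ast_r$ along a rim $r$ is a sum over a pair of dual bases of the hom-spaces attached to its two half-rims, so by \Cref{rule_dissolve_verts} each such pairing glues the two sphere diagrams that share $r$ into a single closed diagram. Doing this for every edge of $P$ and every segment of the embedded links glues all the $\Gamma_c(v)$ into one closed colored string diagram $D_c$ sitting inside the solid torus, with the $X$-strand running once around the non-contractible direction and the $V$-strands winding as in \eqref{eq_tvfs_torus_wilsonparam}.

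It then remains to carry out the sum over colorings $c$. The colors of faces enter $D_c$ only as labels of closed loops and of bands of parallel strands; summing over the color of a disk face and absorbing the factor $\dim(c(f))^{\chi(f)}=\dim(c(f))$ supplied by $\dim(c)$ is precisely an instance of \Cref{rule_dissolve_edges} --- or, when a whole band of strands is involved, of \Cref{rule_mergebases} and \Cref{rule_tracedbases} --- which dissolves the corresponding strand, while a color forced to be $\one$ by Schur's lemma is removed via \Cref{rule_dissolve_verts}. Performing this for the red faces $A,B,C,D$ and the black boundary faces $M_1,\dots,M_n,J_1,J_n,N_{n-1},N_n$ collapses $D_c$ to the diagram on the right-hand side of \eqref{eq_fs_def_basissum}, the surviving dual-basis sum being the $\alpha,\alpha^*$ of that formula, and the powers of $\dim(\ccat)$ generated along the way (through \Cref{rule_tracedbases}) cancel the prefactor $\dim(\ccat)^{-K}$ with $K=2$, leaving no stray scalar. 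This establishes the case $r=1$; for general $r$ one repeats the argument with the analogous skeleton, the only difference being that the boundary link reconnects with a cyclic shift by $r$ rather than by $1$.

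I expect the main difficulty to be the bookkeeping, on two fronts. The scalar bookkeeping requires tracking the prefactor $\dim(\ccat)^{-K}$, the Euler-characteristic powers in $\dim(c)$, and every factor $\dim(i)$ or $\dim(\ccat)$ produced by \Cref{rule_dissolve_edges}, \Cref{rule_tracedbases} and by closed loops, and checking that they cancel exactly, so that the normalization of \Cref{defn_fs_general} is reproduced with no extra constant. The topological bookkeeping is more delicate: one must verify that gluing the graphs $\Gamma_c(v)$ reproduces the Frobenius-Schur string diagram with the correct combinatorics --- that the winding \eqref{eq_tvfs_torus_wilsonparam} of the boundary link becomes the tensor power $V^{\otimes n}$, that the untwisted non-contractible $X$-loop threads through all $n$ copies with the half-braidings in the right order and with no spurious framing twist, and that the evaluation and coevaluation caps implicit in the $r$-twist of \Cref{defn_fs_general} arise from the faces $C$ and $D$ near the switches. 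Keeping the face orientations and the left/right conventions at the switches \eqref{eq_statespace_insertionrim}--\eqref{eq_statespace_insertionrim1} consistent with those underlying \eqref{eq_fs_def_basissum} is where most of the care is needed.
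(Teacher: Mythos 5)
Your proposal follows essentially the same route as the paper's proof: read off the $n+5$ local graphs $\Gamma_c(v)$ from figure \eqref{eq_fs_man_skel}, glue them along rims via \Cref{rule_dissolve_verts}, then eliminate the state-sum colors one face at a time with \Cref{rule_dissolve_edges}, \Cref{rule_mergebases} and \Cref{rule_tracedbases}, checking that the $\dim(\ccat)$-factors cancel the prefactor $\dim(\ccat)^{-2}$ and that the surviving diagram is \eqref{eq_fs_def_basissum}. This is exactly the computation carried out in equations \eqref{twcyl_calc_0} through \eqref{twcyl_calc_11}.
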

The remainder of this section comprises the proof of \Cref{prop_tvfs}.
As mentioned above, we restrict to the case $r=1$. 
This is, however, only a matter of convenience; indeed, \Cref{prop_tvfs} holds for general values of $(n,r) \in \ints \times \ints$.

According to \Cref{sec_tvgen}, we assign a closed graph to each vertex or switch.
These graphs are evaluated on the canonical vector $\ast_c$, weighted by the dimensions of the colors for all faces, and summed over, as dictated by \eqref{eq_tv_preblock_fixcol}.
Here, $c$ denotes a coloring. In the following we will use the same letters for colors of faces as for the faces themselves, writing e.g. $A$ for $c(A)$. 
As a consequence, the sum over $c$ in \eqref{eq_tv_preblock_fixcol} turns into a sum over colors for all faces.
\begin{align}\begin{aligned} \label{twcyl_calc_0}
    |\bord{T}_{n,1}^{X,V}| &=\frac{1}{\dim(\ccat)^2}\sum_{\substack{A,B,C,D\\J_1,J_n,M_\bullet,N_{n-1},N_n}}  \pdim{A}\pdim{B}\pdim{C}\pdim{D}\pdim{J_1}\pdim{J_n}\pdim{M_\bullet}\pdim{N_{n-1}}\pdim{N_n} 
    \\
    & \hspace{.7cm}\Left({1}\Gamma_c(a) \Gamma_c(x) \Gamma_c(y)    \hspace{.3cm}\Gamma_c(u_1) \prod_{i=2}^{n-2}\left( \Gamma_c(u_i) \right)
     \Gamma_c(u_{n-1}) \Gamma_c(u_n) \hspace{.3cm} \Gamma_c(v) \Gamma_c(w) \Right){1}(\ast_c).
\end{aligned}\end{align}
As a first step, we read off all the $n+5$ graphs for the $n+5$ vertices and switches involved from figure \eqref{eq_fs_man_skel}.
This is done using the procedure from \eqref{eq_ball_around_vert} for the vertices.
The graphs associated to switches are obtained using \eqref{pic_switch_eval}.
The vertices of these graphs come in dual pairs, and they have to be labeled by elements of dual bases of the appropriate hom-spaces. (This constructs the vector $\ast_c$.)
We make use of \Cref{conv_dual_verts_in_stringdiags} and do not label the vertices.
The calculations below are easier to follow in color.
With the same order of factors as in \eqref{twcyl_calc_0}, we obtain
\begin{align}\begin{aligned} \label{twcyl_calc_1}
    &\frac{1}{\dim(\ccat)^2}\sum_{\substack{A,B,C,D\\J_1,J_n,M_\bullet,N_{n-1},N_n}}  \pdim{A}\pdim{B}\pdim{C}\pdim{D}\pdim{J_1}\pdim{J_n}\pdim{M_\bullet}\pdim{N_{n-1}}\pdim{N_n} 
    \\
    & \pica{1}{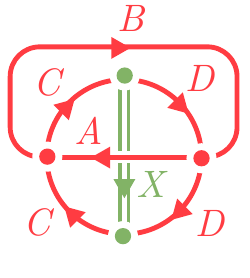} \hspace{1cm}\pica{1}{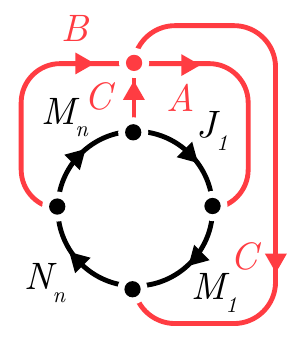} \pica{1}{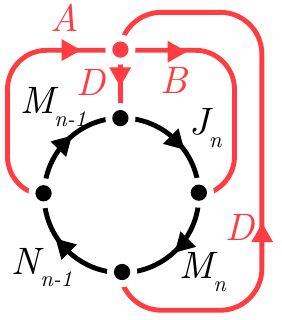} \\
    & \pica{1}{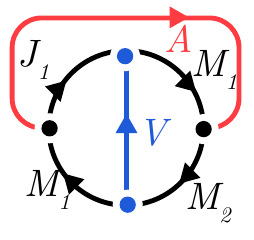}\prod_{i=2}^{n-2}\left(\pica{1}{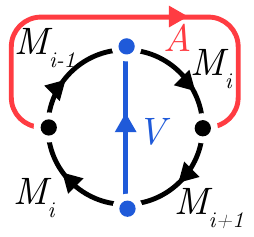}\right)
    \pica{1}{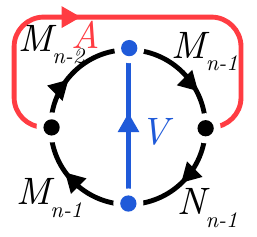}  \pica{1}{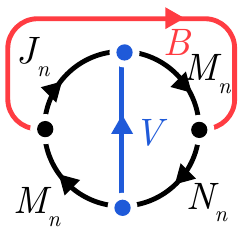}\\
    &  \hspace{1cm}\pica{1}{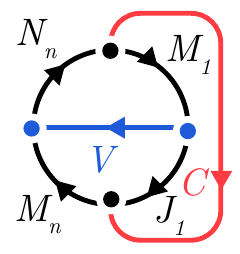}\pica{1}{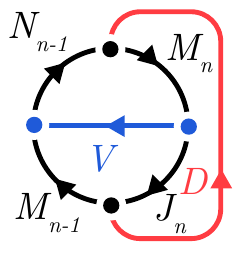}.
\end{aligned}\end{align}
Next, we repeatedly use \Cref{rule_dissolve_verts} to glue together the graphs for the vertices $u_i$, $x$ and $y$.
\begin{align}\begin{aligned} \label{twcyl_calc_2}
    &\frac{1}{\dim(\ccat)^2}\sum_{\substack{A,B,C,D\\J_1,J_n,M_\bullet,N_{n-1},N_n}}  \pdim{A}\pdim{B}\pdim{C}\pdim{D}\pdim{J_1}\pdim{J_n}\pdim{M_\bullet}\pdim{N_{n-1}}\pdim{N_n} 
    \\
    & \pica{1}{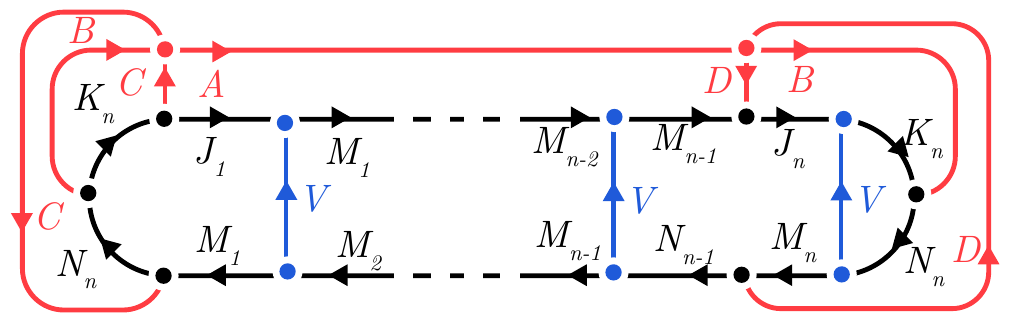}\\
    & \pica{1}{letter_pics/bcalc_1_Gamma_a.pdf} \hspace{1cm}\pica{1}{letter_pics/bcalc_1_Gamma_v.pdf}\pica{1}{letter_pics/bcalc_1_Gamma_w.pdf}.
\end{aligned}\end{align}
\Cref{rule_dissolve_verts} can also be used to glue in the remaining graphs associated to $v$ and $w$.
\begin{align}\begin{aligned} \label{twcyl_calc_3}
    &\frac{1}{\dim(\ccat)^2}\sum_{\substack{A,B,C,D\\J_1,J_n,M_\bullet,N_{n-1},N_n}}  \pdim{A}\pdim{B}\pdim{C}\pdim{D}\pdim{J_1}\pdim{J_n}\pdim{M_\bullet}\pdim{N_{n-1}}\pdim{N_n} 
    \\
    & \pica{1}{letter_pics/bcalc_1_Gamma_a.pdf}\pica{1}{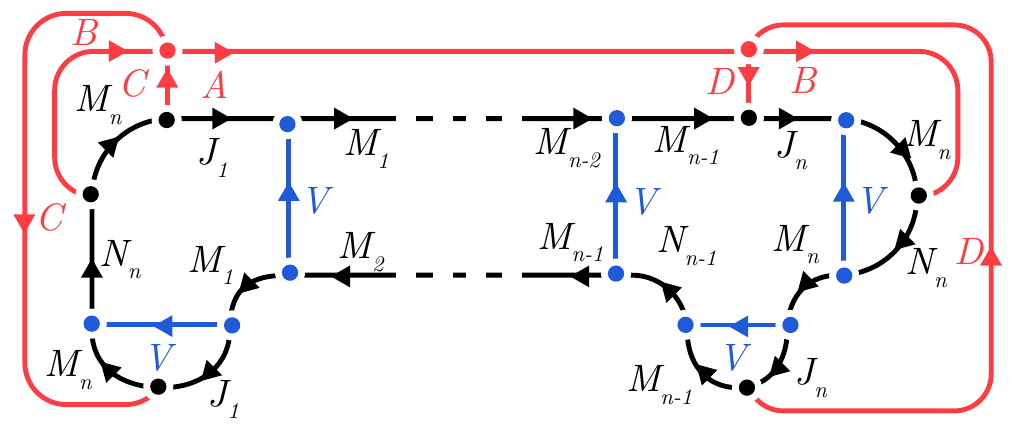}.
\end{aligned}\end{align}
Finally, we use \Cref{rule_dissolve_verts} one last time to obtain a connected graph.
\begin{align}\begin{aligned} \label{twcyl_calc_4}
    &\frac{1}{\dim(\ccat)^2}\sum_{\substack{A,B,C,D\\J_1,J_n,M_\bullet,N_{n-1},N_n}}  \pdim{A}\pdim{B}\pdim{C}\pdim{D}\pdim{J_1}\pdim{J_n}\pdim{M_\bullet}\pdim{N_{n-1}}\pdim{N_n} 
    \\
    & \pica{1}{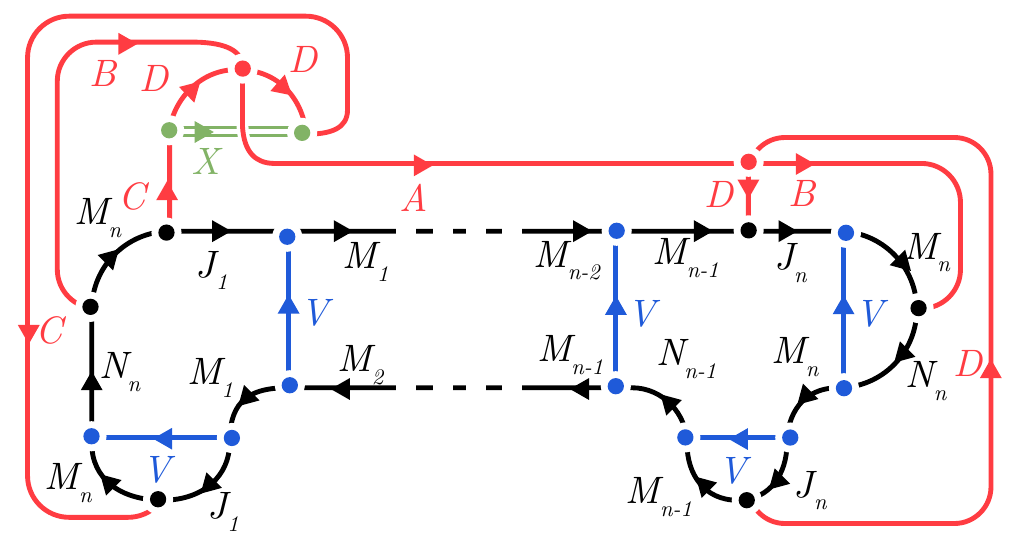}.
\end{aligned}\end{align}
We now use \Cref{rule_dissolve_edges} to dissolve the strand labeled by $N_{n-1}$. In the process, the sum over $N_{n-1}$ (together with the factor $\pdim{N_{n-1}}$) disappears.
\begin{align}\begin{aligned} \label{twcyl_calc_5}
    &\frac{1}{\dim(\ccat)^2}\sum_{\substack{A,B,C,D\\J_1,J_n,M_\bullet,N_n}}  \pdim{A}\pdim{B}\pdim{C}\pdim{D}\pdim{J_1}\pdim{J_n}\pdim{M_\bullet}\pdim{N_n} 
    \\
    & \pica{1}{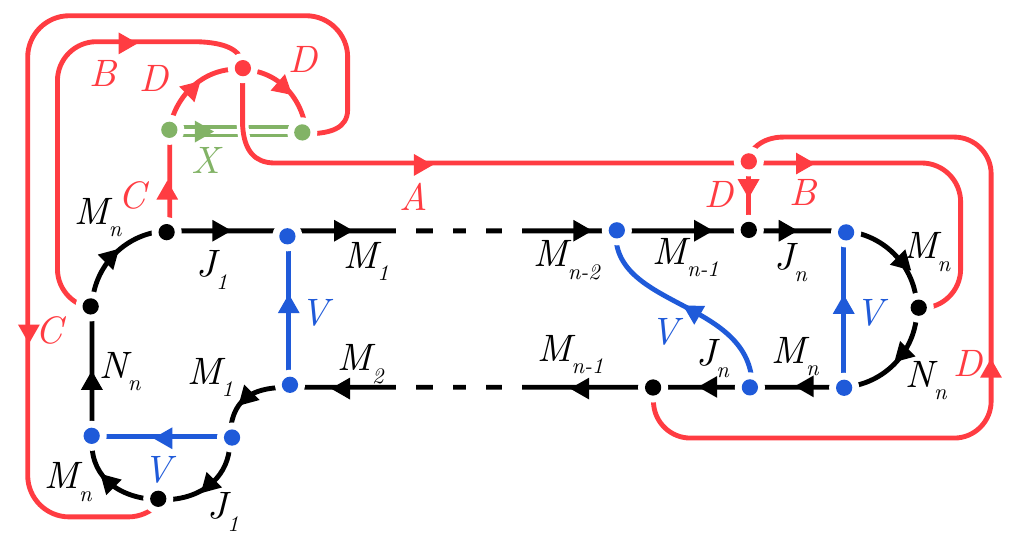}.
\end{aligned}\end{align}
The next step is merely a deformation of the previous graph.
\begin{align}\begin{aligned} \label{twcyl_calc_6}
    &\frac{1}{\dim(\ccat)^2}\sum_{\substack{A,B,C,D\\J_1,J_n,M_\bullet,N_n}}  \pdim{A}\pdim{B}\pdim{C}\pdim{D}\pdim{J_1}\pdim{J_n}\pdim{M_\bullet}\pdim{N_n} 
    \\
    & \pica{1}{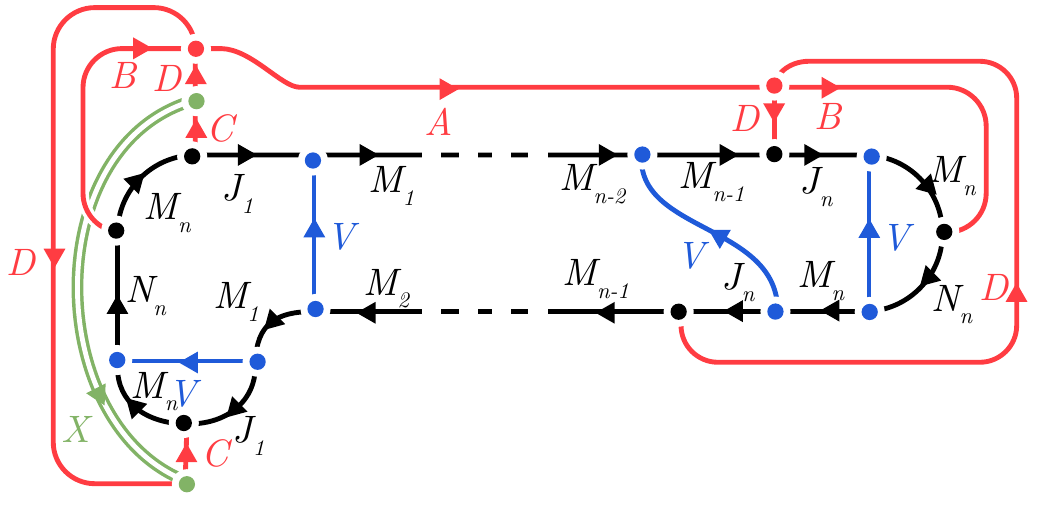}.
\end{aligned}\end{align}
We eliminate the strand labeled by $A$ using \Cref{rule_dissolve_edges}.
\begin{align}\begin{aligned} \label{twcyl_calc_7}
    &\frac{1}{\dim(\ccat)^2}\sum_{\substack{B,C,D\\J_1,J_n,M_\bullet,N_n}}  \pdim{B}\pdim{C}\pdim{D}\pdim{J_1}\pdim{J_n}\pdim{M_\bullet}\pdim{N_n} 
    \\
    & \pica{1}{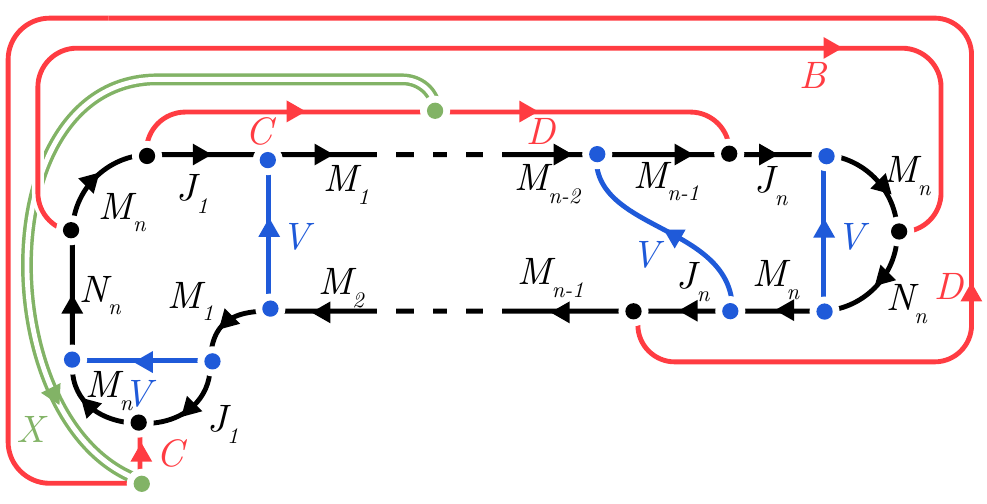}.
\end{aligned}\end{align}
Using the spherical structure, we can flip the $D$-labeled strand from the top to the bottom of the diagram, and then dissolve $B$ using \Cref{rule_dissolve_edges}.
\begin{align}\begin{aligned} \label{twcyl_calc_9}
    &\frac{1}{\dim(\ccat)^2}\sum_{\substack{C,D\\J_1,J_n,M_\bullet,N_n}}  \pdim{C}\pdim{D}\pdim{J_1}\pdim{J_n}\pdim{M_\bullet}\pdim{N_n} 
    \\
    & \pica{1}{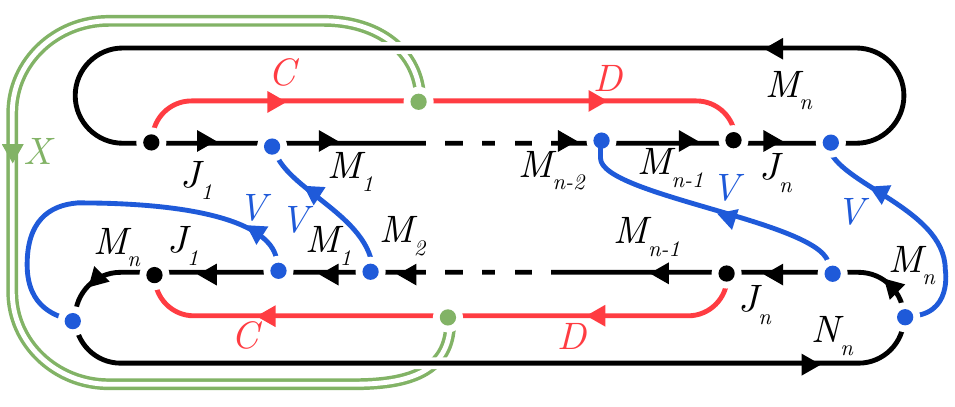}.
\end{aligned}\end{align}
Next, we apply \Cref{rule_dissolve_edges} once more to eliminate $N_n$.
\begin{align}\begin{aligned} \label{twcyl_calc_10}
    &\frac{1}{\dim(\ccat)^2}\sum_{\substack{C,D\\J_1,J_n,M_\bullet}}  \pdim{C}\pdim{D}\pdim{J_1}\pdim{J_n}\pdim{M_\bullet}
    \\
    & \pica{1}{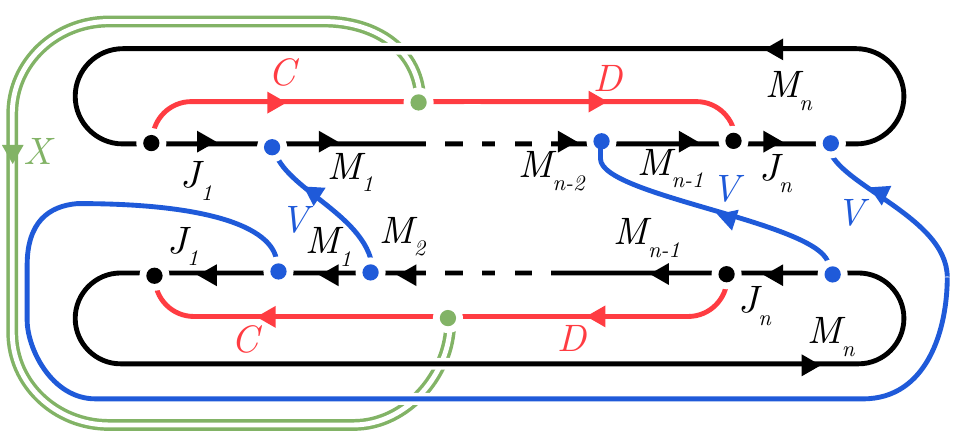}.
\end{aligned}\end{align}
We can now apply \Cref{rule_mergebases} to the pair of lines labeled by $J_1$. The summation over $J_1$ leads to a pair of dual vertices, which we draw as a box. 
\begin{align}\begin{aligned} \label{twcyl_calc_10_a}
    &\frac{1}{\dim(\ccat)^2}\sum_{\substack{C,D\\J_n,M_\bullet}}  \pdim{C}\pdim{D}\pdim{J_n}\pdim{M_\bullet}
    \pica{1}{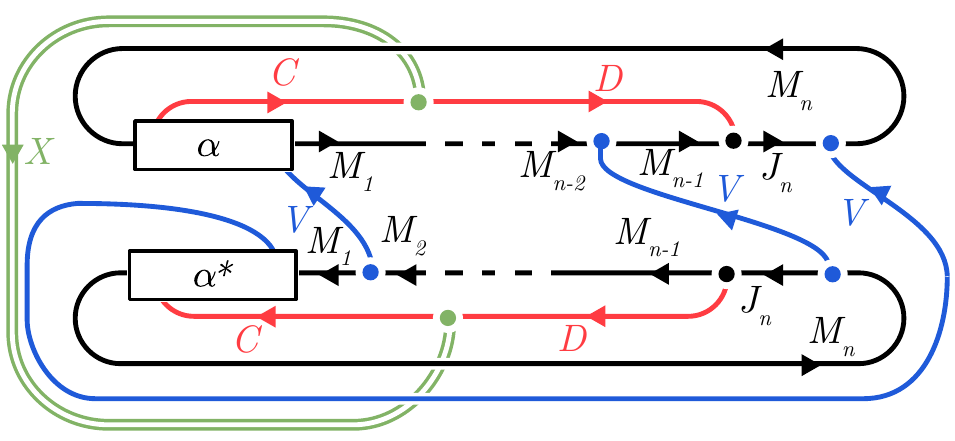}.
\end{aligned}\end{align}
After applying \Cref{rule_mergebases} several more times, only the sums over $C$, $D$ and $M_n$ remain.
\begin{align}\begin{aligned} \label{twcyl_calc_10_b}
    &\frac{1}{\dim(\ccat)^2}\sum_{C,D,M_n}  \pdim{C}\pdim{D}\pdim{M_n}
     \pica{1}{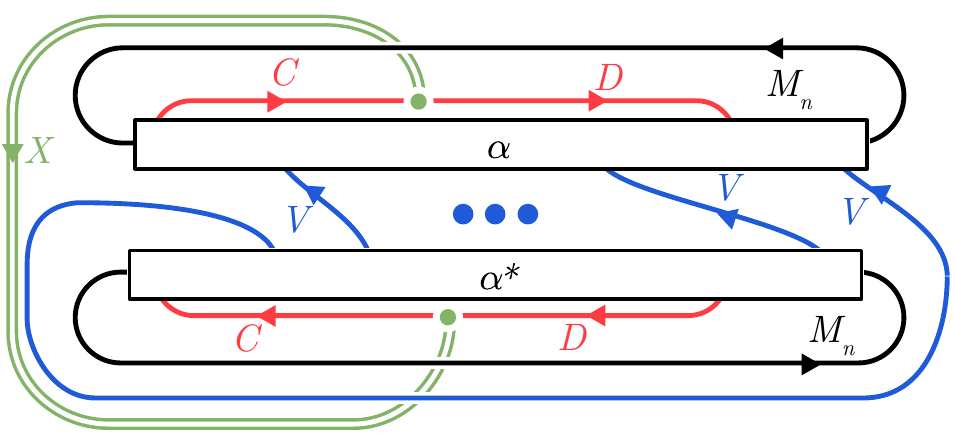}.
\end{aligned}\end{align}
$C$ can also be eliminated using \Cref{rule_mergebases}.
\begin{align}\begin{aligned} \label{twcyl_calc_10_c}
    &\frac{1}{\dim(\ccat)^2}\sum_{D,M_n}  \pdim{D} \pdim{D}\pdim{M_n}
    \pica{1}{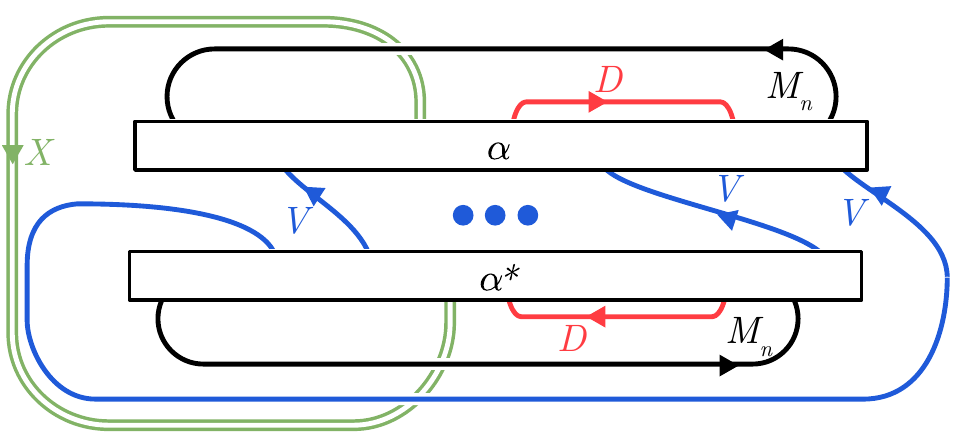}.
\end{aligned}\end{align}
We now employ \Cref{rule_tracedbases} to eliminate the line labeled by $M_n$, together with the sum over $M_n$. 
Note that we obtain a factor of $\dim(\ccat)$.
\begin{align}\begin{aligned} \label{twcyl_calc_10_d}
    &\frac{1}{\dim(\ccat)}\sum_{\substack{D}}  \pdim{D}
     \pica{1}{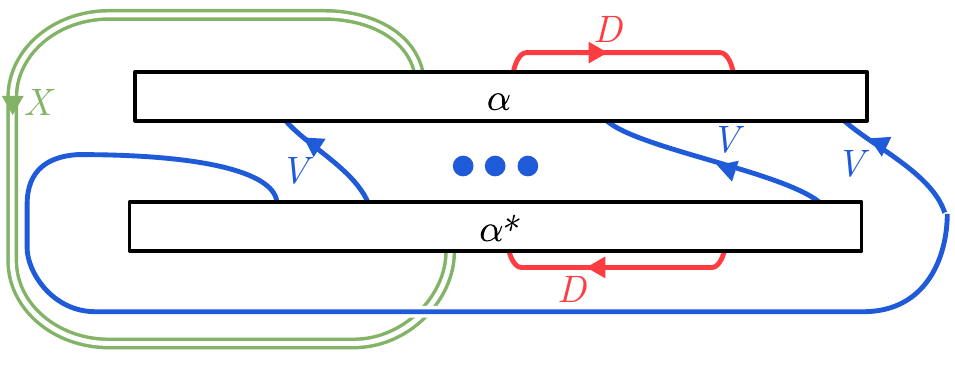}.
\end{aligned}\end{align}
Using \Cref{rule_tracedbases} once more with respect to $D$, we obtain the following graph.
\begin{align}\begin{aligned} \label{twcyl_calc_11}
    & \pica{1}{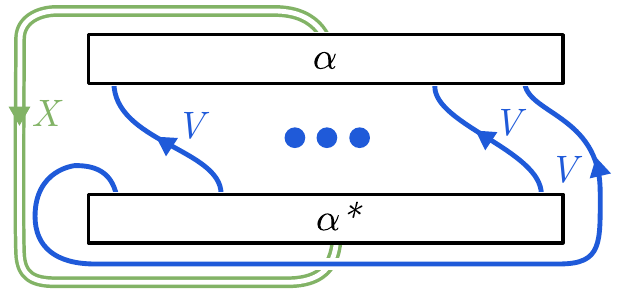}.
\end{aligned}\end{align}
Comparison with \eqref{eq_fs_def_basissum} shows that this is indeed the Frobenius-Schur indicator.

To conclude, we have shown $|\bord{T}_{n,1}^{X,V}| = \nu_{n,1}^{X}(V)$. By a similar calculation, one finds that the general case, $|\bord{T}_{n,r}^{X,V}| = \nu_{n,r}^{X}(V)$ also holds.
\section{$\mathrm{SL}(2,\ints)$-Equivariance}\label{sec_tvfs_equiv}
The geometric representation of the Frobenius-Schur indicators obtained in \Cref{sec_tvfs} provides us with a new tool to study them. 
In this section, we show how one can geometrically understand the $\mathrm{SL}(2,\ints)$-equivariance of the  indicators established by Sommerh\"auser and Zhu \cite{sz-congruencesubgroup}, generalized by Ng and Schauenburg \cite{ns-general}.

It follows from the additivity property of the indicators, $\nu_{n,r}^{X} (V) + \nu_{n,r}^{X'} (V) = \nu_{n,r}^{X\oplus X'} (V)$, that $\nu_{n,r}^{X} (V)$ only depends on the associated vector $[X]\in K_0(Z(\ccat))\otimes_\ints \field$ in the Grothendieck algebra, rather than the object $X$.
Since the Drinfeld center of a spherical fusion category is a modular fusion category, its Grothendieck algebra comes with a $\mathrm{SL}(2,\ints)$-action; this is called the \emph{canonical modular representation} in \cite[Sec. 1.4]{ns-general}.
We may also act with an element of $\mathrm{SL}(2,\ints)$ on the lower indices of the indicator by right matrix multiplication.
For our purpose, we twist the right-multiplication action by multiplying the element $\mathfrak{g} \in \mathrm{SL}(2,\ints)$ from both sides with the invertible matrix $\mathrm{diag}(1,-1)$, denoting
\[\Tilde{\mathfrak{g}} := \left(\begin{array}{cc}
            1 & 0 \\
            0 & -1
        \end{array}\right)\, \mathfrak{g} \left(\begin{array}{cc}
            1 & 0 \\
            0 & -1
        \end{array}\right).\]
The presence of $\mathrm{SL}(2,\ints)$-equivariance means that we can interchange the two actions $[X] \mapsto \mathfrak{g}[X]$ and $(n,r) \mapsto (n,r)\Tilde{\mathfrak{g}}$ in the parameters of the  indicators, which is the content of the following proposition, see \cite[Thm. 8.3]{sz-congruencesubgroup} or \cite[Thm. 5.4]{ns-general}, the former in the context of semisimple Hopf algebras.
\begin{proposition}\label{prop_equiv}
For $\mathfrak{g} \in \mathrm{SL}(2,\ints)$, $(n,r) \in \ints \times \ints$, $V \in \ccat$ and $X \in Z(\ccat)$, we have
\begin{equation*}
    \nu_{(n,r)}^{\mathfrak{g}[X]} (V) = \nu_{(n,r)\Tilde{\mathfrak{g}}}^{[X]} (V).
\end{equation*}
\end{proposition}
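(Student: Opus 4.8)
The plan is to use the geometric description from \Cref{prop_tvfs}: the indicator $\nu_{n,r}^X(V)$ equals the state-sum invariant $|\bord{T}_{n,r}^{X,V}|$ of a decorated solid torus. Since both sides of the claimed equality are realized as such invariants (of $\bord{T}_{n,r}^{\mathfrak{g}[X],V}$ and $\bord{T}_{(n,r)\Tilde{\mathfrak{g}}}^{[X],V}$ respectively, using additivity to make sense of the $K_0$-valued decoration), it suffices to produce a common manifold obtained from $\bord{T}_{n,r}^{X,V}$ by cutting and regluing. Concretely, I would embed a two-dimensional torus $T^2 \hookrightarrow \bord{T}_{n,r}^{X,V}$ separating the interior $X$-labeled Wilson line from the boundary $V$-labeled lines (the picture referenced as \eqref{glue_tor_inside}), cut along it, and reglue the two pieces — a smaller solid torus carrying the $X$-line, and a thickened boundary torus carrying the $V$-lines — by an element $\mathfrak{g}$ of $\mcg(T^2) = \mathrm{SL}(2,\ints)$.

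The key steps, in order: (1) Recall/establish that the Turaev--Viro state sum is compatible with gluing along a torus, i.e. that cutting along $T^2$ and regluing by a mapping class $\phi$ has a controlled effect on $|\cdot|$; this is the standard ``gluing boundary'' formalism of \cite[Chap. 15]{tv}, applied internally, with the free-boundary $V$-lines untouched by the surgery. (2) Analyze the effect of the regluing on the \emph{interior}: regluing the small $X$-solid-torus by $\mathfrak{g}$ amounts, on the level of the state-sum evaluation of the inner piece, to replacing the $X$-decoration by the class $\mathfrak{g}[X]$ in $K_0(Z(\ccat))\otimes_\ints\field$ — this is exactly the statement that the Turaev--Viro invariant of a solid torus with a colored core line, viewed as a vector in the torus state space, transforms under the canonical modular representation of \cite[Sec. 1.4]{ns-general}. (3) Analyze the effect of the \emph{same} regluing on the boundary side: the $V$-lines on $\partial\bord{M}$ wind around $T^2$ according to the slope data $(n,r)$ (cf. the parametrization \eqref{eq_tvfs_torus_wilsonparam}), and cutting/regluing by $\mathfrak{g}$ changes these winding numbers by the matrix action $(n,r)\mapsto (n,r)\Tilde{\mathfrak{g}}$ — the twist by $\mathrm{diag}(1,-1)$ arising from the orientation conventions relating the inner and outer solid tori (the two cores are isotopic but the framings/orientations of the gluing torus are related by this reflection). (4) Since the glued manifold is the same regardless of which side we ``absorb'' the regluing into, steps (2) and (3) give
\[
\nu_{(n,r)}^{\mathfrak{g}[X]}(V) = |\bord{T}_{n,r}^{\mathfrak{g}[X],V}| = |\text{glued manifold}| = |\bord{T}_{(n,r)\Tilde{\mathfrak{g}}}^{[X],V}| = \nu_{(n,r)\Tilde{\mathfrak{g}}}^{[X]}(V),
\]
which is the claim.

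The main obstacle is step (1) together with the bookkeeping in step (3): the state-sum construction of \Cref{sec_tvgen} is phrased for \emph{free} boundaries and explicit graph skeleta, not for cutting along internal gluing tori, so one must either (a) invoke the expected compatibility with the full Turaev--Viro TFT (\Cref{hyp_generaltft}-type statement, a choice the authors explicitly flag as not fully carried out here) or (b) do everything skeletally — choosing compatible skeleta on the two pieces, tracking how the state-sum data on the cutting torus assemble, and verifying that the mapping-class regluing acts on this data by the modular $S$ and $T$ matrices. Route (b) is elementary in principle but notationally heavy; I expect the cleanest honest write-up to reduce to checking the two generators $S, T$ of $\mathrm{SL}(2,\ints)$ separately, matching the $S$- and $T$-action on $K_0(Z(\ccat))\otimes_\ints\field$ against the corresponding Dehn twists/flips of the $(n,r)$-winding lines, with the $\Tilde{(\cdot)}$-twist pinned down by carefully tracking orientations across the cut.
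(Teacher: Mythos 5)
Your proposal is essentially the paper's own argument: you cut $\bord{T}_{n,r}^{X,V}$ along the torus separating the interior $X$-line from the boundary $V$-lines, interpose a mapping cylinder for $\mathfrak{g}$, and absorb it either into the inner solid torus (yielding the canonical modular action $[X]\mapsto\mathfrak{g}[X]$) or into the outer shell (changing $(n,r)$ to $(n,r)\Tilde{\mathfrak{g}}$), verifying the latter on the generators $\mathfrak{s},\mathfrak{t}$ by tracking how the boundary lines transform. You also correctly flag the dependence on a Turaev--Viro TFT with free boundary (or a fully skeletal workaround), which is precisely what the paper isolates as \Cref{hyp_generaltft} and discusses in \Cref{rem_noproof}.
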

The algebraic content of \Cref{prop_equiv} finds its natural geometric interpretation by the result of \Cref{sec_tvfs}.
For the ease of exposition, we work under the following hypothesis, see however \Cref{rem_noproof} for a discussion of the relevance of this assumption.
\begin{hypothesis}\label{hyp_generaltft}
Let $\bordcat$ denote a cobordism category whose objects are compact surfaces, possibly with boundary, with embedded marked points.
The points in the interior of a surface are labeled by objects of $Z(\ccat)$, and the points on the boundary of a surface are labeled by objects of $\ccat$.
The morphisms of $\bordcat$ are (equivalence classes of) 3-manifolds $\bord{M}$ with boundary, an embedded network of $Z(\ccat)$-labeled framed oriented lines in the interior, a decomposition of the boundary $\partial \bord{M}$ into the \emph{gluing boundary} $\partial_g \bord{M}$ and the \emph{free boundary} $\partial_f\bord{M}$, and an embedded network of $\ccat$-labeled oriented lines in the free boundary.

We assume that there exists a (symmetric monoidal) functor
\begin{equation*}
    |-| : \bordcat \to \vect
\end{equation*}
such that 
\begin{itemize}
    \item restricted to the subcategory of $\bordcat$ of cobordisms without free boundary ($\partial_f \bord{M} = \emptyset$), $|-|$ reduces to the usual Turaev-Viro TFT as described in \cite{tv},
    \item in the absence of gluing boundaries ($\partial_g \bord{M} = \emptyset$), $|\bord{M}|$ reduces to the scalar associated to $\bord{M}$ by the construction from \Cref{sec_tvgen}.
\end{itemize}
\end{hypothesis}

The two actions on the parameters of the indicators can be related through the geometric viewpoint by using that $\mathrm{SL}(2,\ints)$ is also the mapping class group of the torus, $\mathrm{MCG(\mathbb{S}^1\times \mathbb{S}^1})$.
From the 3-manifold $\bord{T}_{n,r}^{X,V}$ from \Cref{sec_tvfs}, let us cut out a tubular neighborhood of the inner line labeled by $X \in Z(\ccat)$, and glue this solid torus back into the rest along a homeomorphism $\mathfrak{g} \in \mathrm{MCG(\mathbb{S}^1\times \mathbb{S}^1})$ as pictured and described below.
\begin{equation}\label{glue_tor_inside}
\begin{rcases*}
\hspace{1.2cm}\pica{.7}{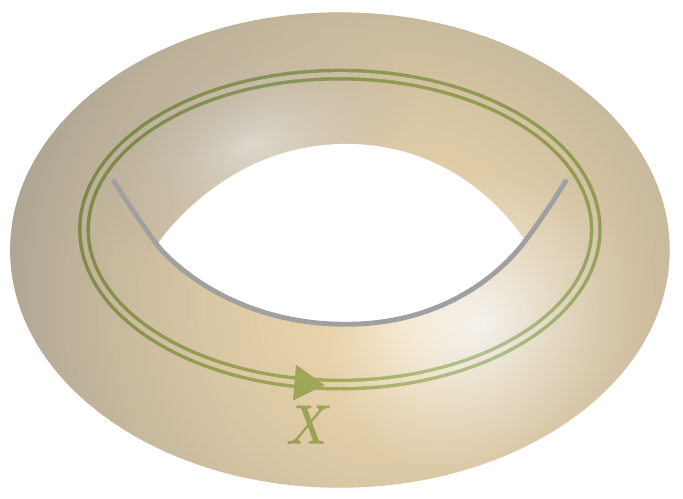} & $= \bord{T}^X$ \\
\pica{.6}{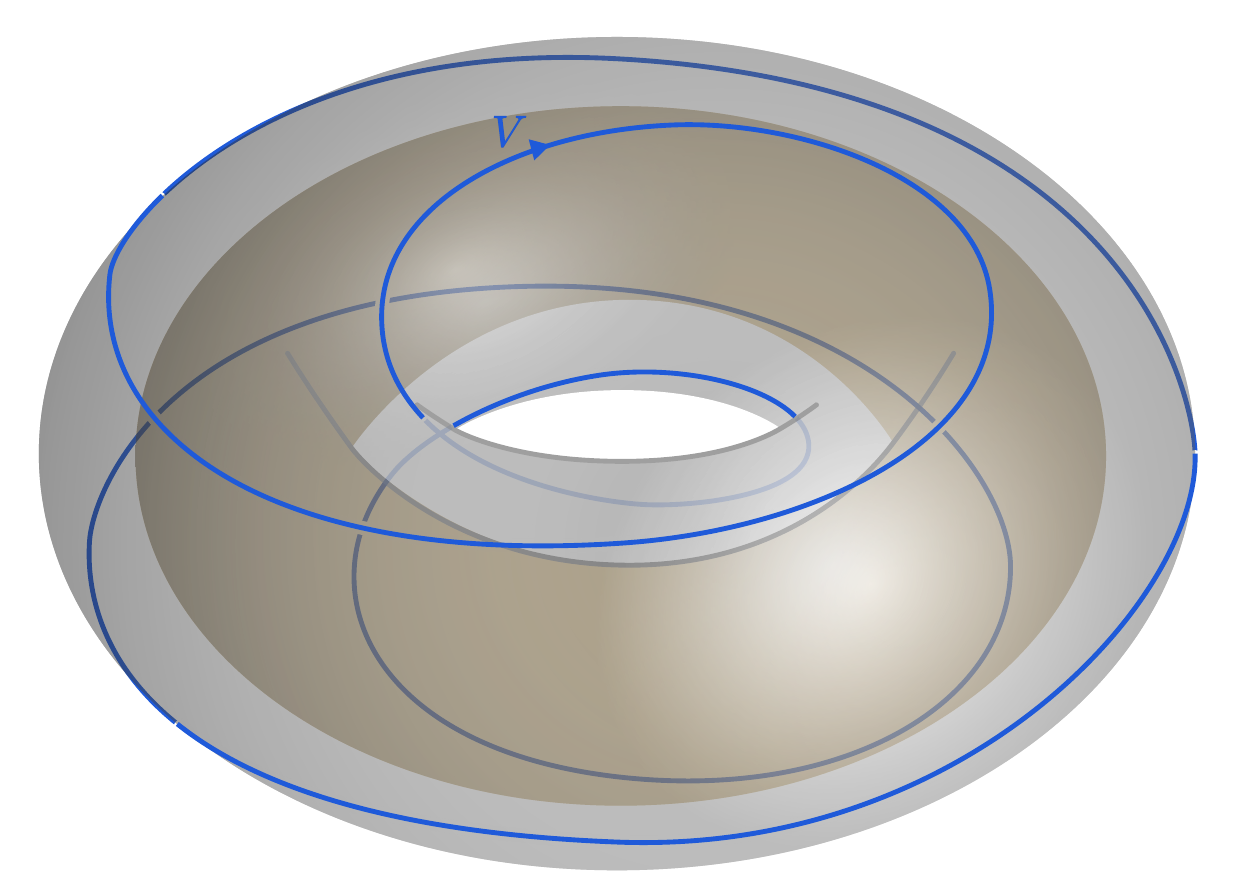} & $= \bord{T}_{n,r}^V$
\end{rcases*}
\quad\xrightarrow[\substack{\text{glue along } \mathfrak{g}\\\text{to a solid torus}}]{}\quad \bord{T}_{n,r}^V \circ_\mathfrak{g} \bord{T}^X.
\end{equation}
The object on the top in the picture \eqref{glue_tor_inside} is a solid torus which we denote by $\bord{T}^X$. 
Its boundary is a gluing boundary, so $\bord{T}^X$ represents in $\bordcat$ a morphism $\emptyset \to \mathbb{S}^1 \times \mathbb{S}^1$.
The manifold on the bottom of Figure \eqref{glue_tor_inside} is a cylinder over a torus, $\bord{T}_{n,r}^V = \mathbb{S}^1 \times \mathbb{S}^1 \times [0,1]$, whose inner boundary component $\mathbb{S}^1 \times \mathbb{S}^1 \times \{0\}$ is a gluing boundary, and whose outer boundary component $\mathbb{S}^1 \times \mathbb{S}^1 \times \{1\}$ is a free boundary with embedded line labeled by $V \in \ccat$.
In $\bordcat$, $\bord{T}_{n,r}^V$ represents a morphism $\mathbb{S}^1 \times \mathbb{S}^1 \to \emptyset$.
It follows from \Cref{hyp_generaltft} and the computation from \Cref{sec_tvfs} that 
\begin{equation*}
    \left\lvert \bord{T}_{n,r}^{X,V} \right\rvert = \left\lvert \bord{T}_{n,r}^V \circ \bord{T}^X \right\rvert,
\end{equation*}
where ``$\circ$'' denotes the gluing of manifolds, which is the composition in $\bordcat$.

It is well-known \cite[Sec. 17.4]{tv} that the vector space associated to a torus is the Grothendieck algebra,
    \begin{equation}\label{eq_tvfs_equiv_torusisgrothendieck}
        \left\lvert \mathbb{S}^1 \times \mathbb{S}^1 \right\rvert \cong K(Z(\ccat)) \otimes_\ints \field,
    \end{equation}
and that under the identification $\homsetin{\vect}{\field}{K(Z(\ccat)) \otimes_\ints \field} \cong K(Z(\ccat)) \otimes_\ints \field$, the manifold $\bord{T}^X$ is assigned the vector $\lvert \bord{T}^X \rvert = [X]$.
Due to the additivity property of the TFT with respect to labels of the embedded loops, 
$\lvert \bord{T}^X \rvert + \lvert \bord{T}^{X'} \rvert = \lvert \bord{T}^{X\oplus X'} \rvert$, it makes sense to label the embedded line in $\bord{T}^X$ not only by an object $X\in Z(\ccat)$, but by a vector $\xi \in K(Z(\ccat)) \otimes_\ints \field$.
We then obtain $\lvert \bord{T}^\xi \rvert = \xi$.

The gluing of manifolds in Figure \eqref{glue_tor_inside} to a solid torus can be realized as a composition of three cobordisms: By a standard construction, an element of the mapping class group $\mathfrak{g} \in \mathrm{SL}(2,\ints) \cong \mathrm{MCG(\mathbb{S}^1\times \mathbb{S}^1})$ defines a cobordism $\bord{T}^\mathfrak{g} : \mathbb{S}^1 \times \mathbb{S}^1 \to \mathbb{S}^1 \times \mathbb{S}^1$.
The gluing of manifolds along $\mathfrak{g}$ as in Figure \eqref{glue_tor_inside} can then be written as a composition of cobordisms $\lvert \bord{T}_{n,r}^V \circ \bord{T}^\mathfrak{g} \circ \bord{T}^X \rvert$.
Our argument relies on the following two equalities of cobordisms:
\begin{equation*}
    \bord{T}_{n,r}^V \circ \bord{T}^\mathfrak{g} = \bord{T}_{(n,r)\Tilde{\mathfrak{g}}}^V, \qquad \lvert \bord{T}^\mathfrak{g} \circ \bord{T}^X \rvert = \lvert \bord{T}^{\mathfrak{g}[X]} \rvert.
\end{equation*}
The second equality is well known \cite[Ex. 4.5.5 and Exc. 4.5.6]{bak-kir}, \cite[Sec. 5.4]{tur-knots}.

To see the first equality, we push the action of the homeomorphism $\mathfrak{g}$ all the way to the boundary, and check that it changes the configuration of the $V$-labeled lines on the boundary accordingly.
It suffices to show this for the two generators $\mathfrak{s}$ and $\mathfrak{t}$ of $\mathrm{SL}(2,\ints)$.
If we draw the surface $\mathbb{S}^1 \times \mathbb{S}^1$ as a rectangle with opposite sides identified, $\mathfrak{s}$ acts as a 90$^\circ$ clockwise rotation.
The following picture shows how the boundary defect lines transform under the $\mathfrak{s}$-transformation:
\begin{equation*}
    \mathfrak{s} :
    \pica{1}{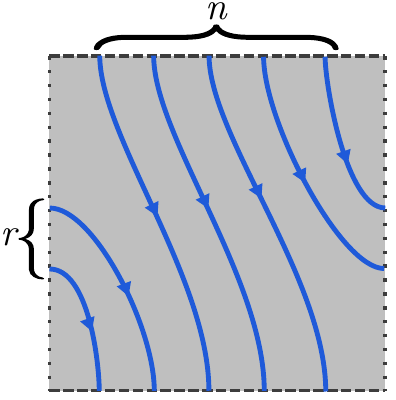} \mapsto \pica{1}{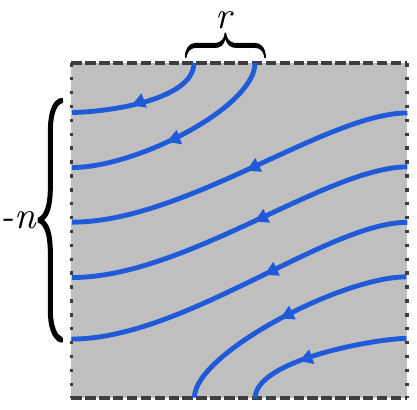}.
\end{equation*}
This is consistent with $(n,r)\Tilde{\mathfrak{s}} = (r,-n)$. Note that the negative sign appears because the strands meet the left side of the rectangle with opposite orientation.
The homeomorphism $\mathfrak{t}$ is a Dehn twist: 
\begin{equation*}
    \mathfrak{t} : 
    \pica{1}{chap_tvfs_pics/FS_equiv_a.pdf}
    \mapsto \pica{1}{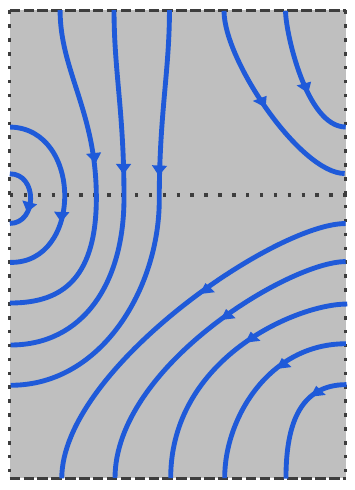} \cong 
    \pica{1}{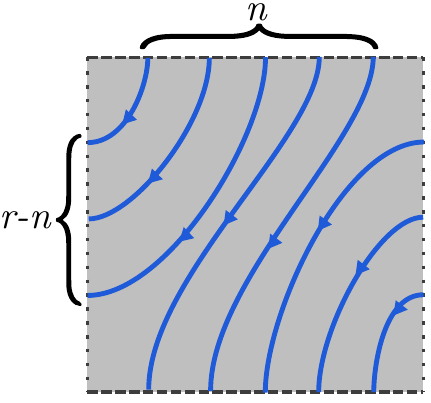}.
\end{equation*}
Here, "$\cong$" means "is isotopic to". In the middle picture, the region above the dashed horizontal line is copied from the left picture, and the region below the dashed line corresponds to the action of a Dehn twist. Again, the result is consistent with $(n,r)\Tilde{\mathfrak{t}} = (n,r-n)$.
    
Combining the results, we find
\begin{equation*}
    \nu_{(n,r)}^{\mathfrak{g}[X]} (V) = \left\lvert \bord{T}^{\mathfrak{g}[X],V}_{n,r} \right\rvert  = \left\lvert \bord{T}^{V}_{n,r} \circ \bord{T}^{\mathfrak{g}[X]} \right\rvert = \left\lvert \bord{T}^{V}_{n,r} \circ \bord{T}^{\mathfrak{g}} \circ \bord{T}^{X} \right\rvert = \left\lvert \bord{T}^{V}_{(n,r)\Tilde{\mathfrak{g}}}  \circ \bord{T}^{X} \right\rvert =  \left\lvert \bord{T}^{X,V}_{(n,r)\Tilde{\mathfrak{g}}} \right\rvert = \nu_{(n,r)\Tilde{\mathfrak{g}}}^{[X]} (V),
\end{equation*}
as in \Cref{prop_equiv}.
This shows the equivariance property from \Cref{prop_equiv}.

\begin{rem}\label{rem_noproof}
As presented here, our arguments rely on \Cref{hyp_generaltft}.
Let us first remark that this is not a bold assumption.
State-sum TFTs with free boundary (or more generally, with defects) have been discussed in the literature \cite{cms-deftfts,peps}.

Moreover, it is possible to give a full proof of equivariance without relying on \Cref{hyp_generaltft}.
To this end, one first introduces a category\footnote{$\bordcatskel$ is not really a category, but a non-unital category without identities: When gluing any two cobordisms in $\bordcatskel$ together, the resulting cobordism will be equipped with the glued skeleton. There is no skeleton which stays unchanged when glued to another skeleton, except for the empty skeleton of the empty manifold.} $\bordcatskel$ of cobordisms with free boundary, where embedded skeleta are part of the data.
The constructions of \Cref{sec_tvgen} extend to a functor $\bordcatskel \to \vect$.
This allows us to argue with cutting-and-gluing operations akin to those in \eqref{glue_tor_inside}, as long as we work with fixed skeleta. 
For a general homeomorphism $\mathfrak{g} \in \mathrm{MCG}(\mathbb{S}^1 \times \mathbb{S}^1)$, the operation \eqref{glue_tor_inside} alters the skeleton.
In order to nevertheless use the argument as above, we need to revert to a skeleton which is invariant under $\mathfrak{g}$. (More precisely, its intersection with the torus boundary needs to be invariant.)
It is not possible to find such a skeleton for general $\mathfrak{g}$, but it is possible to construct an $\mathfrak{s}$-invariant skeleton and an $\mathfrak{st}$-invariant skeleton. (Recall that $\{\mathfrak{s}, \mathfrak{st}\}$ form a set of generators for $\mathrm{SL}(2,\ints)$, and that both elements are of finite order.)
Finally, one then has to adapt the calculations from \Cref{sec_tvfs} to these two skeleta.

Such an extension of our arguments to a new proof of the well-established fact of $\mathrm{SL}(2,\ints)$-equivariance does not substantially add to the insights we gained in this note. We therefore refrain from a presenting such an argument, which will, in any case, be superseded by a general proof of the Turaev-Viro construction with free boundary.
\end{rem}

\paragraph{Acknowledgement}
We thank Peter Schauenburg for a discussion about Frobenius-Schur indicators, and J\"urgen Fuchs and Lukas Woike for helpful comments on the manuscript.
JF is funded by the Deutsche Forschungsgemeinschaft (DFG, German Research Foundation) within the project SCHW\,1162/6-1.
CS is partially supported by the Deutsche Forschungsgemeinschaft under Germany's
Excellence Strategy - EXC 2121 ``Quantum Universe'' - 390833306 and under SCHW\,1162/6-1.
\appendix

\printbibliography

\end{document}